\newtheorem{thm}{Theorem}[section]
\newtheorem{coro}[thm]{Corollary}
\newtheorem{lem}[thm]{Lemma}
\newtheorem{rem}[thm]{Remark}
\newtheorem{exa}[thm]{Example}
\newtheorem{assu}[thm]{Assumption}
\theoremstyle{definition}
\newtheorem{defi}[thm]{Definition}
\newcommand{\field}[1]{\mathbb{#1}}
\newcommand{\R}{\field{R}}
\newcommand{\N}{\mathbb{N}}
\newcommand{\vu}{\mathbf{u}}
\newcommand{\vn}{\mathbf{n}}
\newcommand{\vy}{\mathbf{y}}
\newcommand{\vv}{\mathbf{v}}
\newcommand{\vw}{\mathbf{w}}
\newcommand{\vz}{\mathbf{z}}
\newcommand{\normal}{n}
\newcommand{\id}{\operatorname{id}}
\newcommand{\argmin}{\mathrm{\,arg\,min}}
\newcommand{\abs}[1]{\left| #1 \right|}
\newcommand{\norm}[1]{\left\| #1 \right\|}
\newcommand{\inner}[2]{\left< #1, #2 \right>}
\newcommand{\set}[1]{\left\{ #1 \right\}}
\newcommand{\hone}[1]{{\mathcal{H}}^1(#1)}
\newcommand{\htm}[1]{{\widetilde{\mathcal{H}}_{\mathcal{T}}^{1}(#1)}}
\newcommand{\hn}[1]{\mathcal{H}_{\mathcal{N}}^1(#1)}
\newcommand{\leb}[1]{L^{#1}(\mathcal{M})}
\newcommand{\lebtwo}[1]{L^{#1}(\mathcal{M}_2)}
\newcommand{\xs}[1]{{\mathcal{U}}_{#1}}
\newcommand{\vx}[1]{{\mathbf{x}}_{#1}}
\newcommand{\Hone}[1]{{H}^1(#1)}
\newcommand{\parametrization}{m}
\newcommand{\surface}{{\mathcal{M}}}
\newcommand{\tv}{{\widetilde{\vu}}}
\newcommand{\udag}{\vu^\dagger}
\newcommand{\trace}{\text{Tr}}
\newcommand{\projection}[1]{\mathcal{P}_{#1}}
\newcommand{\gradientM}{{\nabla}_{\mathcal{M}}}
\newcommand{\Ptm}{\projection{\tau}}
\newcommand{\Pn}{\projection{n}}
\newcommand{\TB}{\mathcal{TM}}
\numberwithin{equation}{section}
\title{Convergence of Tikhonov regularization for solving ill-posed operator equations with solutions defined on surfaces}
\begin{document}
\font\myfont=cmr12 at 10pt
\author{\myfont Guozhi Dong\footnote{Computational Science Center, University of Vienna, 
Oskar-Morgenstern Platz 1, 1090 Vienna, Austria (\url{guozhi.dong@univie.ac.at})}, 
Bert J\"uttler\footnote{Institute of Applied Geometry, Johannes Kepler University of Linz, 
Altenberger Str. 69, A-4040 Linz, Austria (\url{bert.juettler@jku.at})}, 
Otmar Scherzer\footnote{Computational Science Center, University of Vienna, Oskar-Morgenstern Platz 1, 1090 Vienna, and Johann Radon Institute for Computational and Applied Mathematics (RICAM), Austrian Academy of Sciences, Altenberger Str. 69, A-4040 Linz, Austria (\url{otmar.scherzer@univie.ac.at})},
Thomas Takacs\footnote{Institute of Applied Geometry, Johannes Kepler University of Linz, 
Altenberger Str. 69, A-4040 Linz, Austria (\url{thomas.takacs@jku.at})}}
\date{}
\maketitle

\begin{abstract}
We study Tikhonov regularization for solving ill--posed operator equations where the 
solutions are functions defined on surfaces. One contribution of this paper is an 
error analysis of Tikhonov regularization which takes into account perturbations of 
the surfaces, in particular when the surfaces are approximated by spline surfaces.
Another contribution is that we highlight the analysis of regularization for functions with range in vector bundles over surfaces.  
We also present some practical applications, such as an inverse problem of gravimetry 
and an imaging problem for denoising vector fields on surfaces, and show the numerical verification.
\end{abstract}

\section{Introduction}                                                                                                                                                                                                                                                                                                                                                    We are interested in solving linear \emph{inverse problems}
\begin{equation}\label{eq:linear}
 F \vu = \vy,
\end{equation}
where $F : \xs{1}\to \xs{2}$ is a bounded operator between Hilbert spaces of 
functions defined on \emph{surfaces}.
Note that the functions in spaces $\xs{1}$ and $\xs{2}$ might be defined on different surfaces $\surface_1$ and $\surface_2$ respectively.
Especially we pay attention to applications (see also Section \ref{sec:vectorfields} and \ref{sec:numerics})
where $\xs{1}$ denotes a space of functions from a surface $\surface_1$ into the 
\emph{vector bundle} or into the \emph{tangent vector bundle}.

We assume that solving \eqref{eq:linear} is ill--posed and requires some regularization 
to approximate the solution in a stable way. The regularization method of choice for solving \eqref{eq:linear} 
is Tikhonov regularization, which consists in approximating a solution of \eqref{eq:linear} by a minimizer 
of the functional 
\begin{equation}
 \label{eq:Tik}
 \mathcal{T}_{\alpha,\vy^\delta}(\vu) := \norm{F\vu-\vy^\delta}_{\xs{2}}^2 + \alpha \mathcal{R}(\vu)
\end{equation}
over $\xs{1}$. 
Here $\vy^\delta$ denotes some approximation of the exact data $\vy$, from which we assume that  
\begin{equation}
 \label{eq:delta}
 \norm{\vy - \vy^\delta}_{\xs{2}} \leq \delta. 
\end{equation}
Moreover, $\alpha>0$ is a regularization parameter, and $\mathcal{R}$ is an appropriate regularization functional. 
In contrast to previous work \cite{Gro84,Mor84,TikGonSteYag95,EngHanNeu96,Tan97,TikLeoYag98,SchGraGroHalLen09,SchKalHofKaz12} 
we are emphasizing on Hilbert spaces of functions defined on \emph{surfaces}.  

The analysis of Tikhonov regularization has advanced from a theory for solving ill--posed operator equations in a 
Hilbert space setting, both in a finite and an infinite dimensional setting (see for instance \cite{TikArs77,Gro84,Mor84,Tan97,Han10}), 
to more complex situations, when $F$ is nonlinear \cite{BakGon94,TikGonSteYag95,EngHanNeu96,TikLeoYag98} or with sophisticated 
regularization \cite{BurOsh04,ResSch06,HofKalPoeSch07,SchGraGroHalLen09,PoeResSch10,SchKalHofKaz12,MueSil12}, and in statistical setting 
\cite{KaiSom05}. 
The analysis of finite dimensional approximations of regularizers in infinite dimensional spaces (see for instance 
\cite{NeuSch90,PoeResSch10,KirPoeResSch15}) is highly relevant for this work. The results of \cite{PoeResSch10} are 
already quite general, but do \emph{not} cover approximations of solutions of \eqref{eq:linear} 
by functions defined on approximating surfaces. Note that for functions defined on subsets of an Euclidean space, the 
finite dimensional functions approximating infinite dimensional functions are also defined on the Euclidean space, and this is 
not the case anymore if the underlying manifold is approximated. In this paper the differential geometrical concept of a 
pullback is used to compare functions defined on different surfaces.

The second issue of this paper is, that the solution of \eqref{eq:linear} may be a function with range 
in a \emph{vector bundle}. Consequently, when the surface is approximated also the vector bundle, or in other words, the 
range of the function is varying by approximation.
To resolve this issue we consider vector fields represented with \emph{ambient} bases.
It is then presented in the paper that the analysis of regularization for vector fields on surfaces can be incorporated into a general framework developed for vector valued functions on surfaces.

We consider several applications and perform numerical tests. One example concerns reconstructing the \emph{magnetization} 
from measurements of the magnetic potential \cite{Bla96,GubIveMasWin11,Ger16}. Another numerical test example concerns 
\emph{vector field denoising}.
Our analysis on regularizing tangent vector fields in ambient spaces provides a different point of view to 
existing work on tangent vector field regularization (see for instance \cite{LefBai08,KirLanSch13,KirLanSch15}).

The paper is organized as follows:
In Section \ref{sec:theory} we first review standard regularization results in an infinite dimensional setting, 
and then perform a convergence analysis of Tikhonov regularization taking into 
account approximated surfaces, which cannot be handled with the existing theory. 
In Section \ref{sec:vectorfields}, we introduce proper spaces for functions with range in vector bundles,
and discuss applications of the regularization theory for recovering vector fields in ambient spaces.
We make an additional discussion on recovering tangent vector fields there. 
Finally, in Section \ref{sec:numerics}, we present some examples and verify the theoretical results 
numerically. 
The underlying geometric concepts, 
including all the basic notations, are summarized in the Appendix \ref{appendix:background}.

\section{Regularization for functions defined on surfaces}
\label{sec:theory}

Below we first review results from the regularization literature (see \cite{Gro84,EngHanNeu96,SchGraGroHalLen09}), which provide 
well-posedness, convergence and stability of Tikhonov regularization. 
For this purpose we summarize the basic assumptions needed throughout this paper.
\begin{assu}[Chapter 3, \cite{SchGraGroHalLen09}]
\label{assu:ori}
In the context of Tikhonov regularization \eqref{eq:Tik} we make the following assumptions:
\begin{enumerate}
\item Let $\xs{1}$, $\xs{2}$ be two Hilbert spaces. 
\item $F :\mathcal{D}(F) \subseteq \xs{1}\to \xs{2}$ is a bounded linear operator with 
      domain $\mathcal{D}(F)$, which is weakly sequentially closed.
\item $\mathcal{R}: \xs{1}\to [0,\infty]$ is a proper, convex, and sequentially weakly lower-semi continuous 
      functional satisfying 
      $\mathcal{D} := \mathcal{D}(\mathcal{R})\cap\mathcal{D}(F) \neq \emptyset.$
\item For every $\alpha,\theta >0$ the sets 
      $M_{\alpha,\vy}(\theta):=\set{\vu \in \xs{1}: \mathcal{T}_{\alpha,\vy}(\vu)\leq \theta}$ 
      are weakly sequentially compact.
\end{enumerate}
\end{assu} 

\subsection{Standard regularization theory} 
We review some regularization results from the literature:
\begin{thm}[Chapter 3, \cite{SchGraGroHalLen09}]
\label{thm:directreg}
 Let $F$, $\mathcal{R}$, $\mathcal{D} $, $\xs{1}$ and $\xs{2}$ satisfy Assumption \ref{assu:ori}, and 
 $\vy$ and $\vy^\delta$ satisfy \eqref{eq:delta}. 
 \begin{itemize}
 \item Assume that $\alpha > 0$. Then there exists a minimizer of $\mathcal{T}_{\alpha,\vy^\delta}$.
       If $(\vy_k)_{k\in \N}$ is a sequence converging to $\vy$ in $\xs{2}$, then every sequence 
       \[\left(\vu_k := \argmin \set{\mathcal{T}_{\alpha,\vy_k}(\vu): 
        \vu \in \mathcal{D}}\right)_{k\in \N}\]
        has a weakly convergent subsequence in $\xs{1}$, and the limit is a minimizer of 
        $\mathcal{T}_{\alpha,\vy}$.
        
 \item If there exists $\vu_0\in \mathcal{D} $ such that
       \begin{equation} \label{eq:usol}
          F \vu_0=\vy.
       \end{equation}
       Then there exists an $\mathcal{R}$ minimizing solution $\vu^\dagger$. That is 
       \begin{equation*}
          \mathcal{R}(\vu^{\dagger}) = \min \set{\mathcal{R}(\vu) : \vu \in \mathcal{D} , F \vu=\vy}.
       \end{equation*}
 \item Assume that $\alpha = \alpha(\delta):(0,\infty) \to (0,\infty)$ satisfies
       \begin{equation} \label{eq:parameter_choice}
          \alpha \to 0, \; \frac{\delta^2}{\alpha} \to 0, \text{ as } \delta \to 0.
       \end{equation}
\end{itemize}
       Let $(\vy_k)_{k \in \N}$ satisfy $\norm{\vy-\vy_k}_{\xs{2}} \leq \delta_k$ and set 
       $\alpha_k = \alpha(\delta_k)$. Then every sequence 
       $(\vu_k = \text{argmin} \set{\mathcal{T}_{\alpha_k,\vy_k}(\vu) :\vu \in \mathcal{D}})$ 
       has a weakly convergent subsequence, and the limit is an $\mathcal{R}$-minimizing solution. 
\begin{itemize}
 \item Moreover, if in addition $\udag$ satisfies a \emph{source condition},
       which assumes that there exists an element $W \in \xs{2}$, such that 
       \begin{equation} \label{eq:sourcecondition}
        F^* W\in   \partial \mathcal{R}(\udag).
       \end{equation}
       Here $\partial \mathcal{R}(\udag)$ denotes the subgradient of $\mathcal{R}$ at $\udag$,
       and $F^*: \xs{2} \to \xs{1}$ is the adjoint operator of $F$.
\end{itemize}
       Then, the $\mathcal{R}$-minimizing solution satisfies
       \begin{equation*}
        B_{\mathcal{R}}(\vu_k,\udag)=
        \mathcal{O}\left(\max\set{\frac{\delta_k^2}{\alpha_k},\alpha_k }\right),
       \end{equation*}
        where $B_{\mathcal{R}}(\cdot,\cdot)$ denotes the Bregman distance with respect to the convex functional $\mathcal{R}$, 
        see for instance \cite{BurOsh04}.
 \end{thm}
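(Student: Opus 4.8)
The plan is to establish the four assertions in order, each by the direct method of the calculus of variations, drawing systematically on the structural hypotheses of Assumption~\ref{assu:ori}.

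For \emph{existence} of a minimizer of $\mathcal{T}_{\alpha,\vy^\delta}$ with $\alpha>0$ fixed, I note the functional is nonnegative, hence bounded below, and take a minimizing sequence $(\vu_n)$. Eventually $\mathcal{T}_{\alpha,\vy^\delta}(\vu_n)\le\theta$, so the iterates lie in a sublevel set $M_{\alpha,\vy^\delta}(\theta)$, which is weakly sequentially compact by item~4; extract $\vu_{n_j}\rightharpoonup\bar\vu$. Since $\mathcal{D}(F)$ is weakly sequentially closed (item~2), $\bar\vu\in\mathcal{D}(F)$, and boundedness and linearity of $F$ make it weak-weak continuous, so $F\vu_{n_j}\rightharpoonup F\bar\vu$. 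Combining weak lower semicontinuity of the norm with that of $\mathcal{R}$ (item~3) shows $\mathcal{T}_{\alpha,\vy^\delta}$ is weakly sequentially lower semicontinuous, so $\bar\vu$ is a minimizer. For \emph{stability}, let $\vy_k\to\vy$ and let $\vu_k$ minimize $\mathcal{T}_{\alpha,\vy_k}$. Testing each $\vu_k$ against a fixed competitor bounds the values uniformly, placing the $\vu_k$ in a common weakly compact sublevel set; I extract $\vu_{k_j}\rightharpoonup\hat\vu$ and pass to the limit in the optimality inequality using the same lower semicontinuity together with $\vy_k\to\vy$, concluding that $\hat\vu$ minimizes $\mathcal{T}_{\alpha,\vy}$.

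For the $\mathcal{R}$-\emph{minimizing solution}, the hypothesis $F\vu_0=\vy$ makes the solution set $S:=\set{\vu\in\mathcal{D}:F\vu=\vy}$ nonempty. I pick $(\vu_n)\subset S$ with $\mathcal{R}(\vu_n)\to\inf_{S}\mathcal{R}$; these lie in a weakly compact sublevel set of $\mathcal{R}$, so $\vu_{n_j}\rightharpoonup\udag\in\mathcal{D}(F)$. Weak-weak continuity of $F$ gives $F\udag=\vy$, so $\udag\in S$, and weak lower semicontinuity of $\mathcal{R}$ gives $\mathcal{R}(\udag)\le\liminf\mathcal{R}(\vu_{n_j})=\inf_{S}\mathcal{R}$, so the minimum is attained. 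For \emph{convergence} under \req{parameter_choice}, the decisive inequality comes from testing the minimality of $\vu_k$ against $\udag$ and using $F\udag=\vy$ with \req{delta}:
\begin{equation*}
\norm{F\vu_k-\vy_k}_{\xs{2}}^2+\alpha_k\mathcal{R}(\vu_k)\le\norm{F\udag-\vy_k}_{\xs{2}}^2+\alpha_k\mathcal{R}(\udag)\le\delta_k^2+\alpha_k\mathcal{R}(\udag).
\end{equation*}
Dividing by $\alpha_k$ and using $\delta_k^2/\alpha_k\to0$ gives $\limsup_k\mathcal{R}(\vu_k)\le\mathcal{R}(\udag)$, while $\alpha_k\to0$ forces $\norm{F\vu_k-\vy_k}_{\xs{2}}\to0$, hence $F\vu_k\to\vy$. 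The uniform bound on $\mathcal{R}(\vu_k)$ yields a weakly convergent subsequence $\vu_{k_j}\rightharpoonup\hat\vu\in\mathcal{D}(F)$ with $F\hat\vu=\vy$, and $\mathcal{R}(\hat\vu)\le\liminf\mathcal{R}(\vu_{k_j})\le\mathcal{R}(\udag)$ together with the minimality over $S$ identifies $\hat\vu$ as an $\mathcal{R}$-minimizing solution.

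The \emph{main technical step} is the rate under the source condition $\xi:=F^*W\in\partial\mathcal{R}(\udag)$. Starting from the same inequality I insert the Bregman distance $B_{\mathcal{R}}(\vu_k,\udag)=\mathcal{R}(\vu_k)-\mathcal{R}(\udag)-\inner{\xi}{\vu_k-\udag}$ into $\alpha_k\bigl(\mathcal{R}(\vu_k)-\mathcal{R}(\udag)\bigr)$, obtaining
\begin{equation*}
\norm{F\vu_k-\vy_k}_{\xs{2}}^2+\alpha_k B_{\mathcal{R}}(\vu_k,\udag)\le\delta_k^2-\alpha_k\inner{\xi}{\vu_k-\udag}.
\end{equation*}
Using $\inner{\xi}{\vu_k-\udag}=\inner{F^*W}{\vu_k-\udag}=\inner{W}{F\vu_k-\vy}$ and $\norm{F\vu_k-\vy}_{\xs{2}}\le\norm{F\vu_k-\vy_k}_{\xs{2}}+\delta_k$, the cross term is bounded by $\alpha_k\norm{W}_{\xs{2}}\bigl(\norm{F\vu_k-\vy_k}_{\xs{2}}+\delta_k\bigr)$, and a Young inequality absorbs the residual factor into the quadratic term at the cost of $\tfrac12\alpha_k^2\norm{W}_{\xs{2}}^2$. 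Dropping the remaining nonnegative residual and dividing by $\alpha_k$ leaves
\begin{equation*}
B_{\mathcal{R}}(\vu_k,\udag)\le\frac{\delta_k^2}{\alpha_k}+\tfrac12\alpha_k\norm{W}_{\xs{2}}^2+\norm{W}_{\xs{2}}\delta_k,
\end{equation*}
and the elementary bound $\delta_k\le\tfrac12(\delta_k^2/\alpha_k+\alpha_k)$ collapses every term into $\mathcal{O}(\max\set{\delta_k^2/\alpha_k,\alpha_k})$. I expect the balancing of this cross term through the source condition and Young's inequality to be the only genuinely delicate point; the existence, stability, and convergence assertions are routine once the weak-compactness and lower-semicontinuity hypotheses of Assumption~\ref{assu:ori} are correctly marshalled.
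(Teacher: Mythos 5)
Your proof is correct, and there is nothing in the paper to compare it against: the paper states Theorem \ref{thm:directreg} without proof, as a quotation of known results from Chapter 3 of \cite{SchGraGroHalLen09} (with the rate originating in \cite{BurOsh04}). Your argument is precisely the standard route taken in that literature: direct method with weak compactness of Tikhonov sublevel sets and weak lower semicontinuity for existence and stability, the minimality test against $\udag$ for convergence, and the Bregman-distance rate obtained by writing $\inner{F^*W}{\vu_k-\udag}=\inner{W}{F\vu_k-\vy}$ and absorbing the cross term with Young's inequality.

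One point of precision is worth repairing. Assumption \ref{assu:ori} asserts weak sequential compactness only for the sublevel sets of $\mathcal{T}_{\alpha,\vy}$, not of $\mathcal{R}$. So where you say the minimizing sequence for the $\mathcal{R}$-minimizing solution ``lies in a weakly compact sublevel set of $\mathcal{R}$'', and later that ``the uniform bound on $\mathcal{R}(\vu_k)$ yields a weakly convergent subsequence'', both claims must be routed through item 4 of the assumption: on the solution set $S$ one has $\mathcal{T}_{\alpha,\vy}(\vu)=\alpha\mathcal{R}(\vu)$, so an $\mathcal{R}$-bound there is a $\mathcal{T}_{\alpha,\vy}$-bound; and in the convergence step the two facts you have already established, $\norm{F\vu_k-\vy_k}_{\xs{2}}\to 0$ (hence $\norm{F\vu_k-\vy}_{\xs{2}}$ bounded) and $\limsup_k\mathcal{R}(\vu_k)\leq\mathcal{R}(\udag)$, combine to bound $\mathcal{T}_{\alpha_0,\vy}(\vu_k)$ for any fixed $\alpha_0>0$, which is what item 4 requires. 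This is exactly the device the paper itself uses in its proof of the generalization, Theorem \ref{thm:Convergence}. With that one-line repair your proof is complete.
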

 
\subsection{Convergence analysis taking into account surface perturbations}
\label{sec:analysis}
In the following we study Tikhonov regularization, consisting in minimizing the 
functional $\mathcal{T}_{\alpha,\vy^\delta}$ in \eqref{eq:Tik}, where 
$F:\xs{1} \to \xs{2}$ is an operator between function spaces 
\[\xs{i}:=\mathcal{X}_i(\surface_i):=\set{\vx{i}:\surface_i \rightarrow \R^{r_i}},\text{ for } r_i \in \N^+,\; \text{ and } i=1,2,\]
of functions defined on surfaces $\surface_{i}$, $i=1,2$, respectively. 
We consider families of surfaces $\set{\surface_{i,\sigma}}_{\sigma>0}$ approximating the surfaces 
$\surface_{i,0}:=\surface_i$, $i=1,2$. 
In what follows we use the following abbreviations:
\begin{equation}
\label{eq:simplified_norm} 
\xs{i,\sigma} := \mathcal{X}_i(\surface_{i,\sigma})\,,\quad 
\norm{\cdot}_{i,\sigma} := \norm{\cdot}_{\xs{i,\sigma}}\,,\quad i=1,2,\; \sigma > 0\,,
\end{equation}
where we delete the superscript $0$ in case $\sigma=0$. That is $\norm{\cdot}_1=\norm{\cdot}_{1,0}$.
We need a few assumptions on the approximating surfaces.
\begin{assu}
\label{assu:Surface}
Let $\set{\surface_{i,\sigma}}_{\sigma>0}$ be a sequence of surfaces approximating the surfaces $\surface_i$ for $i=1,2$,
then we assume the following properties and estimates hold:
\begin{itemize}
\item For every $i=1,2$ and every $\sigma \geq 0$, every surface $\surface_{i,\sigma}$ can be parametrized by a patch
      \begin{equation*}
       \parametrization_{i,\sigma}: \Omega_i \subset \R^{d_i} \to \surface_{i,\sigma} \subset \R^{d_i+1}\,,\quad \sigma \geq 0,
      \end{equation*}
      with the same parameter domain $\Omega_i$, and $\parametrization_{i,\sigma}$ is a bijection.
\item The operators
      \begin{equation}
 \label{eq:transform}
 \begin{aligned}
  T_{i,\sigma}: \xs{i} &\to \xs{i,\sigma}\\
     \vx{i} &\mapsto \vx{i} \circ \parametrization_i \circ (\parametrization_{i,\sigma})^{-1}
\end{aligned}
\end{equation}
and the inverse
\begin{equation}
 \label{eq:itransform}
 \begin{aligned}
  (T_{i,\sigma})^{-1}: \xs{i,\sigma} &\to \xs{i}\\
      \vx{i,}{_\sigma} &\mapsto   \vx{i,}{_\sigma} \circ \parametrization_{i,\sigma} \circ \parametrization_i^{-1}
\end{aligned}
\end{equation}
are uniformly bounded. Here by bounded we mean that there exists a real constant $C$ such that 
      \begin{equation}
      \label{eq:C}
       \norm{T_{i,\sigma}\vx{i}}_{i,\sigma}\leq C \norm{\vx{i}}_i\;\text{  and  } 
       \norm{(T_{i,\sigma})^{-1}\vx{i,}{_\sigma}}_i \leq C \norm{\vx{i,}{_\sigma}}_{i,\sigma}.
      \end{equation}
 \item Let the family of operators $F_\sigma: \xs{1,}{_\sigma} \to \xs{2,}{_\sigma}$, $\sigma > 0$ 
       and the operator $F: \xs{1}  \to \xs{2}$ satisfy
       \begin{equation} \label{eq:rhosigma}
       \rho(\sigma) := \norm{(T_{2,\sigma})^{-1}F_\sigma T_{1,\sigma} - F} \to 0 \text{ for } \sigma \to 0 . 
       \end{equation} 
 \item Let $\mathcal{R}_\sigma:\xs{1,}{_\sigma} \rightarrow [0,\infty]$ with $\sigma >0$ be the family of regularization functionals, 
 	   then there exists $C_M\in \R$ such that
       \begin{equation}
       \label{eq:rsigma}
       \abs{\mathcal{R}_\sigma(T_{1,\sigma}\vu) -\mathcal{R}(\vu)}\leq C_M\gamma_1(\sigma), 
       \end{equation}
        holds uniformly for  all $\vu\in A_M\subset \xs{1} $, 
       with $A_M:=\set{\vx{1}\in \xs{1}:\mathcal{R}(\vx{1})\leq M} .$
 \item There exists an $\mathcal{R}$-minimizing solution $\udag$ of \eqref{eq:linear}. 
 \item $\vy_\sigma, \vy_\sigma^\delta \in \xs{2,}{_\sigma}$ satisfy
       \begin{equation} \label{eq:tandata}
        \norm{\vy_\sigma - \vy_\sigma^\delta}_{2,\sigma} =\mathcal{O}(\delta)\, \text{ and }
        \norm{(T_{2,\sigma})^{-1} \vy_\sigma- \vy}_2 = \mathcal{O}(\gamma_2(\sigma))\,,
        \end{equation}
        where $\gamma_2(\sigma) \to 0$ for $\sigma \to 0$.
\end{itemize}
\end{assu}
\begin{rem}[On Assumption \ref{assu:Surface}]
For surfaces which can not be parametrized by a single domain (such as a sphere), we assume that the domain 
can be covered by patches, and the above assumption has to be satisfied on every patch.
\end{rem}

\begin{rem}
\eqref{eq:delta}, \eqref{eq:C} and \eqref{eq:tandata} actually implies that 
\begin{equation}
\label{eq:tandataII}
\max\set{\norm{(T_{2,\sigma})^{-1}\vy_\sigma^\delta - \vy}_2, \norm{\vy_\sigma^\delta - (T_{2,\sigma})\vy}_{2,\sigma}} = 
\mathcal{O}(\gamma_2(\sigma) + \delta).
\end{equation}
\end{rem}
For given data $\vy_\sigma^\delta$ we consider the regularization strategy consisting in minimization of the Tikhonov functional 
for $\vu_\sigma \in \xs{1,}{_\sigma}$,
\begin{equation}
 \label{eq:Tik_sigma}
 \mathcal{T}_{\alpha,\sigma,\vy_\sigma^\delta}(\vu_\sigma) 
 := \norm{F_\sigma \vu_\sigma -\vy_\sigma^\delta}_{2,\sigma}^2 + \alpha \mathcal{R}_\sigma(\vu_\sigma).
\end{equation}

\begin{thm}
\label{thm:Convergence}
Let the Assumptions \ref{assu:ori}, \ref{assu:Surface} hold. 
Moreover, assume that $\alpha:=\alpha(\sigma,\delta) \to 0$ for $\sigma \to 0$, and $\delta \to 0$ 
and that  
\begin{equation} \label{eq:parameter}
 \frac{\rho^2}{\alpha} \to 0, \; \frac{\gamma_2^2}{\alpha} \to 0 , \; \frac{\delta^2}{\alpha}\to 0 \text{ and }  \gamma_1  \to 0,
 \text{ for } \sigma,\delta\;\to 0.
\end{equation}
Let $(\vu_k:=\vu_{\alpha_k,\sigma_k,\delta_k})$ be a sequence of minimizers of 
$\mathcal{T}_k:=\mathcal{T}_{\alpha_k,\sigma_k,\vy_k}$, defined in \eqref{eq:Tik_sigma}, with 
$\vy_k:=\vy_{\sigma_k}^{\delta_k}$ satisfying \eqref{eq:tandata}. In addition we denote by 
\begin{equation}
 \label{eq:hatTk}
 \check{\vu}_k = T_{1,k}^{-1}\vu_k \in  \xs{1}
\end{equation}
the pullback of $\vu_k$ onto $\surface_1$, and $T_{i,k}:=T_{i,\sigma_k}$ for $i=1,2$ and $k\in \N.$
\begin{itemize}
\item We use the abbreviations $\rho_k:=\rho(\sigma_k)$, 
      $\gamma_{i,k}:=\gamma_i(\sigma_k)$ for $i=1,2$ and $\alpha_k := \alpha(\sigma_k,\delta_k)$. 
      Then the limit of every convergent subsequence of $(\check{\vu}_k)$ is an $\mathcal{R}$-minimizing     
      solution.
\item Moreover, assuming the source condition \eqref{eq:sourcecondition} holds,    
      then, the $\mathcal{R}$-minimizing solution has the convergence rate by Bregman-distance
      \begin{equation*}
      B_{\mathcal{R}}(\check{\vu}_k,\udag)=
      \mathcal{O}\left(\max\set{\frac{\rho_k^2}{\alpha_k},\frac{\gamma_{2,k}^2}{\alpha_k},\frac{\delta_k^2}{\alpha_k},\gamma_{1,k}}\right).
      \end{equation*}
\end{itemize}
\end{thm}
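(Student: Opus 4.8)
The plan is to transport the perturbed minimization problem back to the fixed surfaces $\surface_1,\surface_2$ via the transformation operators, reducing everything to a perturbed replay of the proof of Theorem \ref{thm:directreg}. Since $\vu_k$ minimizes $\mathcal{T}_k$ over $\xs{1,\sigma_k}$, the transported exact solution $T_{1,\sigma_k}\udag$ is the natural competitor, giving
\begin{equation*}
\norm{F_{\sigma_k}\vu_k - \vy_k}_{2,\sigma_k}^2 + \alpha_k\mathcal{R}_{\sigma_k}(\vu_k) \;\leq\; \norm{F_{\sigma_k}T_{1,\sigma_k}\udag - \vy_k}_{2,\sigma_k}^2 + \alpha_k\mathcal{R}_{\sigma_k}(T_{1,\sigma_k}\udag).
\end{equation*}
First I would bound the right-hand side. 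Writing $F_{\sigma_k}T_{1,\sigma_k}\udag - T_{2,\sigma_k}\vy = T_{2,\sigma_k}\big[(T_{2,\sigma_k})^{-1}F_{\sigma_k}T_{1,\sigma_k} - F\big]\udag$ (using $F\udag=\vy$) and combining the uniform bound \eqref{eq:C} with \eqref{eq:rhosigma} controls the residual against the transported exact data by $C\rho_k\norm{\udag}_1$, while \eqref{eq:tandataII} controls $\norm{\vy_k - T_{2,\sigma_k}\vy}_{2,\sigma_k}$ by $\mathcal{O}(\gamma_{2,k}+\delta_k)$; together the squared residual is $\mathcal{O}(\rho_k^2+\gamma_{2,k}^2+\delta_k^2)$. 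The regularization term is handled by \eqref{eq:rsigma}, namely $\mathcal{R}_{\sigma_k}(T_{1,\sigma_k}\udag)\le \mathcal{R}(\udag)+C_M\gamma_{1,k}$.

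Dividing the resulting inequality by $\alpha_k$ and invoking the parameter choice \eqref{eq:parameter} shows that the perturbed residual $R_k := \norm{F_{\sigma_k}\vu_k-\vy_k}_{2,\sigma_k}$ tends to $0$ and that $\mathcal{R}_{\sigma_k}(\vu_k)$ stays bounded; by the two-sided estimate \eqref{eq:rsigma} this transfers to a uniform bound on $\mathcal{R}(\check{\vu}_k)$, so the pullbacks lie in a fixed sublevel set of $\mathcal{R}$ and Assumption \ref{assu:ori} furnishes a weakly convergent subsequence $\check{\vu}_{k_j}\rightharpoonup\bar{\vu}$. A weakly convergent sequence is norm-bounded, whence $\rho_k\norm{\check{\vu}_k}_1\to 0$. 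Decomposing
\begin{equation*}
F\check{\vu}_{k} - \vy = \big[F - (T_{2,\sigma_k})^{-1}F_{\sigma_k}T_{1,\sigma_k}\big]\check{\vu}_k + (T_{2,\sigma_k})^{-1}\big[F_{\sigma_k}\vu_k - \vy_k\big] + \big[(T_{2,\sigma_k})^{-1}\vy_k - \vy\big]
\end{equation*}
and estimating the three summands by $\rho_k\norm{\check{\vu}_k}_1$, $CR_k$ and $\mathcal{O}(\gamma_{2,k}+\delta_k)$ shows $F\check{\vu}_{k_j}\to\vy$ strongly. Weak continuity of the bounded operator $F$ then gives $F\bar{\vu}=\vy$, and weak lower semicontinuity of $\mathcal{R}$ together with \eqref{eq:rsigma} and the bound $\limsup_j\mathcal{R}_{\sigma_{k_j}}(\vu_{k_j})\le\mathcal{R}(\udag)$ gives $\mathcal{R}(\bar{\vu})\le\mathcal{R}(\udag)$; hence $\bar{\vu}$ is an $\mathcal{R}$-minimizing solution.

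For the rate I would feed the subgradient $F^*W\in\partial\mathcal{R}(\udag)$ from \eqref{eq:sourcecondition} into the Bregman distance,
\begin{equation*}
B_{\mathcal{R}}(\check{\vu}_k,\udag) = \mathcal{R}(\check{\vu}_k) - \mathcal{R}(\udag) - \inner{W}{F\check{\vu}_k - \vy}_{\xs{2}},
\end{equation*}
using $F\udag=\vy$ and $\inner{F^*W}{\cdot}=\inner{W}{F\,\cdot}$. The difference $\mathcal{R}(\check{\vu}_k)-\mathcal{R}(\udag)$ is bounded by the rearranged comparison inequality of the first paragraph, which crucially leaves the negative term $-R_k^2/\alpha_k$ at our disposal, and the inner product is estimated through the same three-term decomposition of $F\check{\vu}_k-\vy$. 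Absorbing the genuine residual contribution $\mathcal{O}(R_k)$ by Young's inequality against $-R_k^2/\alpha_k$, and splitting the first-order perturbation and data terms via $\rho_k\le\tfrac12(\rho_k^2/\alpha_k+\alpha_k)$ and its analogues, one collects precisely the orders $\rho_k^2/\alpha_k$, $\gamma_{2,k}^2/\alpha_k$, $\delta_k^2/\alpha_k$ and $\gamma_{1,k}$ appearing in the statement (the completion of the square contributing a term of the same nature as the $\alpha_k$ in Theorem \ref{thm:directreg}).

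The main obstacle is the bookkeeping across the two families of spaces: every comparison between $\surface_i$ and $\surface_{i,\sigma_k}$ passes through $T_{i,\sigma_k}$ and generates the operator-perturbation term $\big[(T_{2,\sigma_k})^{-1}F_{\sigma_k}T_{1,\sigma_k}-F\big]\check{\vu}_k$, which is only $o(1)$ once $\norm{\check{\vu}_k}_1$ is known to be bounded. Since that bound is itself a by-product of the weak compactness argument, the order of the steps must be respected — first bound $\mathcal{R}(\check{\vu}_k)$, then extract the weak limit, and only afterwards exploit that weak limits are norm-bounded to annihilate the perturbation term — so as to avoid a circular estimate.
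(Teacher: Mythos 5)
Your skeleton coincides with the paper's for the first assertion: the competitor $T_{1,\sigma_k}\udag$ in the minimality inequality, the residual bound $\mathcal{O}(\rho_k^2+\gamma_{2,k}^2+\delta_k^2)$ obtained from \eqref{eq:C}, \eqref{eq:rhosigma} and \eqref{eq:tandataII}, the transfer of the bound on $\mathcal{R}_{\sigma_k}(\vu_k)$ to $\mathcal{R}(\check{\vu}_k)$ via \eqref{eq:rsigma}, and the three-term decomposition of $F\check{\vu}_k-\vy$. There is, however, a genuine gap in your compactness step. You extract the weakly convergent subsequence from the claim that ``the pullbacks lie in a fixed sublevel set of $\mathcal{R}$ and Assumption \ref{assu:ori} furnishes a weakly convergent subsequence.'' Assumption \ref{assu:ori} only asserts weak sequential compactness of the sets $M_{\alpha,\vy}(\theta)=\set{\vu\in\xs{1}:\mathcal{T}_{\alpha,\vy}(\vu)\leq\theta}$, i.e.\ of sublevel sets of the full Tikhonov functional, not of $\mathcal{R}$ alone; in the paper's own applications $\mathcal{R}=\abs{\cdot}_{\hone{\surface_1}}^2$ is a squared \emph{seminorm} whose sublevel sets are unbounded, so the step fails as stated. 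The paper instead first establishes $\lim_k\norm{T_{2,k}^{-1}F_kT_{1,k}\check{\vu}_k-T_{2,k}^{-1}\vy_k}_2=0$ (its \eqref{eq:dataconverge}), deduces boundedness of $\norm{F\check{\vu}_k}_2$ and hence of $\mathcal{T}_{\alpha_0,\vy}(\check{\vu}_k)$ for a fixed $\alpha_0>0$ and large $k$, and only then invokes item 4 of Assumption \ref{assu:ori} to get boundedness of $\set{\check{\vu}_k}$ in $\xs{1}$ and a weak cluster point. Since you need the norm bound on $\check{\vu}_k$ to control the perturbation term $\rho_k\norm{\check{\vu}_k}_1$, and you propose to obtain that bound as a consequence of a weak convergence you have not yet justified, the circularity you flag at the end of your proposal is not actually broken by your ordering.

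For the convergence rate you depart from the paper, which simply feeds the inequality \eqref{eq:NeuSch} into \cite[Theorem 2.6]{PoeResSch10}. Your self-contained argument --- inserting $F^*W\in\partial\mathcal{R}(\udag)$ into the Bregman distance, estimating $\inner{W}{F\check{\vu}_k-\vy}_2$ through the same three-term decomposition, and absorbing the genuine residual by Young's inequality against the retained $-\norm{F_k\vu_k-\vy_k}_{2,k}^2$ --- is the standard proof and is sound; it has the advantage of not outsourcing the key step. Note that it unavoidably produces an additional $\alpha_k$ inside the max (from $\rho_k\leq\tfrac12(\rho_k^2/\alpha_k+\alpha_k)$ and its analogues), exactly as in Theorem \ref{thm:directreg}; the absence of $\alpha_k$ from the stated bound of Theorem \ref{thm:Convergence} is an omission on the paper's side rather than a defect of your derivation, and is harmless under the parameter choice discussed in the remark following the theorem.
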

\begin{proof}
\begin{itemize}
\item To prove the first item let $\vu_k$ denote the minimizer of $\mathcal{T}_k$. Moreover, we use the abbreviations 
      $F_k =F_{\sigma_k}$, $\mathcal{R}_k=\mathcal{R}_{\sigma_k}$, and $\udag_k = T_{1,k} \udag$.
      
      Then, according to the definition of a minimizer, we have \eqref{eq:minimizer}
      \begin{equation} \label{eq:minimizer}
      \norm{F_k \vu_k-\vy_k}_{2,k}^2+\alpha_k \mathcal{R}_k(\vu_k) 
      \leq \norm{F_k \udag_k-\vy_k}_{2,k}^2+\alpha_k\mathcal{R}_k(\udag_k) .
       \end{equation} 
      Since the vector field $\udag$ solves $F \udag =\vy$, 
    and \eqref{eq:transform} in Assumption \ref{assu:Surface} and Equations \eqref{eq:rhosigma}, \eqref{eq:tandataII} hold, it follows that
      \begin{equation}
       \label{eq:1}
      \begin{aligned}
      \norm{F_k \udag_k-\vy_k}_{2,k}^2 
      & =  \norm{T_{2,k} T_{2,k}^{-1}(F_k \udag_k-\vy_k)}_{2,k}^2 \\
      & \leq C \norm{T_{2,k}^{-1}(F_k \udag_k-\vy_k)}_2^2 \\
      & =   C \norm{T_{2,k}^{-1}F_k T_{1,k}\udag-F \udag +\vy-T_{2,k}^{-1}\vy_k }_2^2\\
      & = \mathcal{O}(\rho_k^2+\gamma_{2,k}^2+\delta_k^2),
      \end{aligned}
      \end{equation}
      
      Applying  \eqref{eq:rsigma}, we find 
      \begin{equation}
      \label{eq:3}
      \begin{aligned}
      \biggl| \mathcal{R}_k(\udag_k) - \mathcal{R}(\udag) \biggr| & = \mathcal{O}(\gamma_{1,k}).
      \end{aligned}
      \end{equation}
      Using the estimates \eqref{eq:1} and \eqref{eq:3} in \eqref{eq:minimizer} shows that 
      \begin{equation}
      \label{eq:4}
      \begin{aligned}
         ~& \norm{F_k \vu_k-\vy_k}_{2,k}^2 +\alpha_k \abs{ \mathcal{R}_k (\vu_k) -\mathcal{R}(\udag)}\\
         =& \mathcal{O}\left( \max\set{\rho_k^2,\gamma_{2,k}^2,\delta_k^2, \alpha_k\gamma_{1,k}} \right).
      \end{aligned}
      \end{equation}
      From \eqref{eq:parameter} and \eqref{eq:4} it follows that $\set{\mathcal{R}_k(\vu_k): k \in \N}$ is 
      uniformly bounded, and consequently it follows 
      from \eqref{eq:rsigma} that
      \begin{equation}
      \label{eq:5}
       \biggl| \mathcal{R}(\check{\vu}_k)- \mathcal{R}_k(\vu_k) \biggr| = \mathcal{O}(\gamma_{1,k}).
      \end{equation}     
      Since $\check{\vu}_k: = T_{1,k}^{-1} \vu_k$, it follows from 
     \eqref{eq:itransform}, \eqref{eq:C}, \eqref{eq:4} and \eqref{eq:5} that 
	\begin{equation} \label{eq:NeuSch}
	\begin{aligned}
	     & \norm{T_{2,k}^{-1} F_k T_{1,k} \check{\vu}_k -T_{2,k}^{-1} \vy_k}_2^2   + 
	        \alpha_k \left( \mathcal{R}(\check{\vu}_k) - \mathcal{R}(\udag) \right)\\	     
	\leq & C\left(\norm{F_k \vu_k - \vy_k}_{2,k}^2  \right) + 
	        \alpha_k  \biggl| \mathcal{R}_k(\vu_k) - \mathcal{R}(\udag) \biggr|
	        + \alpha_k \biggl| \mathcal{R}(\check{\vu}_k) - \mathcal{R}_k(\vu_k) \biggr|\\ 
	\leq & (1+C)\left(\norm{F_k \vu_k-\vy_k}_{2,k}^2+  
	       \alpha_k \biggl| \mathcal{R}_k(\vu_k) - \mathcal{R}(\udag) \biggr|\right) 
	       + \alpha_k \biggl| \mathcal{R}(\check{\vu}_k) - \mathcal{R}_k(\vu_k) \biggr|\\
	= & \mathcal{O}\left(  \max\set{\rho_k^2,\gamma_{2,k}^2,\delta_k^2, \alpha_k\gamma_{1,k}} \right). 
	\end{aligned}
	\end{equation}
	From the assumptions on the parameters \eqref{eq:parameter} it follows after division of the inequality by $\alpha_k$ and 
	taking the limit $k \to \infty$ afterwards that
	\begin{equation*}
        \limsup_k \mathcal{R}(\check{\vu}_k) \leq \mathcal{R}(\udag),
        \end{equation*}
        and consequently also 
        \begin{equation} \label{eq:dataconverge}
        \lim_k \norm{T_{2,k}^{-1} F_k T_{1,k} \check{\vu}_k- T_{2,k}^{-1} \vy_k}_2^2 = 0.
        \end{equation}
        It follows then from \eqref{eq:dataconverge} and \eqref{eq:rhosigma} that 
        $\norm{F \check{\vu}_k}$ is uniformly bounded.
        
        Similarly as in \cite[Theorem 3.26]{SchGraGroHalLen09} it can be seen that for some fixed $\alpha_0$, there is $N_0\in \N^+$ the set
        $\set{\mathcal{T}_{\alpha_0,\vy}(\check{\vu}_k):k \geq N_0}$ is uniformly bounded. 
        Then by the weak sequential compactness of the level sets of $\mathcal{T}_{\alpha_0,\vy}$ (Assumption \ref{assu:ori}), we have $\set{\check{\vu}_k}$ is bounded in $\xs{1}$.
        Thus there is a weakly convergent subsequence of $\set{\check{\vu}_k}$, for which we denote the 
        weak limit by $\bar{\vu}\in \xs{1}$.

        Using \eqref{eq:dataconverge} it follows that 
	\begin{equation*}
	\begin{aligned}
	  & \norm{F \bar{\vu}-\vy}_2^2 \\
	 =& \norm{F \bar{\vu}- T_{2,k}^{-1} F_kT_{1,k}\bar{\vu} + T_{2,k}^{-1} F_kT_{1,k}\bar{\vu}-T_{2,k}^{-1} \vy_k +T_{2,k}^{-1} \vy_k -\vy}_2^2 \to  0\,.
	\end{aligned}
	\end{equation*}
	This in particular shows that $\bar{\vu}\in \xs{1}$ solves \eqref{eq:linear}.
        The weakly lower semi-continuity of the functional $\mathcal{R}(\cdot)$ implies that 
        \begin{equation*}
         \mathcal{R}(\bar{\vu})\leq \liminf_k \mathcal{R}(\check{\vu}_k) \leq \limsup_k \mathcal{R}(\check{\vu}_k) \leq   \mathcal{R}(\udag),
        \end{equation*}
        which because $\udag$ is an $\mathcal{R}$- minimizing solution tells us that $\mathcal{R}(\bar{\vu})=\mathcal{R}(\udag)$, and thus 
        $\bar{\vu}$ is also an $\mathcal{R}$- minimizing solution of \eqref{eq:linear}.
\item To prove the convergence rate result we reconsider \eqref{eq:NeuSch} and the family of operators  
      $\{T_{2,k}^{-1}F_k T_{1,k}: k \in \N\}$. Using \eqref{eq:rhosigma} in Assumption \ref{assu:Surface},
      it follows that 
      \begin{equation*}
      \norm{T_{2,k}^{-1}F_k  T_{1,k} -F} = \mathcal{O}\left(\rho_k\right).
      \end{equation*}
      Moreover, from \eqref{eq:tandataII} it follows that 
      \begin{equation*}
      \norm{T_{2,k}^{-1} \vy_k-\vy}_2 = \mathcal{O}(\gamma_{2,k}+\delta_k).
      \end{equation*}
      We can apply the results of \cite[Theorem 2.6]{PoeResSch10} with the triangle inequality to taking care of some additional error terms on the right side of 
      \eqref{eq:NeuSch}, then we get
      \begin{equation*}
     B_{\mathcal{R}}(\check{\vu}_k,\udag) = 
      \mathcal{O}\left(\max\set{\frac{\rho_k^2}{\alpha_k},\frac{\gamma_{2,k}^2}{\alpha_k},\frac{\delta_k^2}{\alpha_k},\gamma_{1,k}}\right).
      \end{equation*}
\end{itemize}
\end{proof}
\begin{rem} 
We note that if the parameters are chosen in the following way, $\rho(\sigma)=\mathcal{O}(\gamma_1(\sigma))=\mathcal{O}(\gamma_2(\sigma))=\mathcal{O}(\delta)$ (for instance by appropriate discretization)
and $\alpha(\sigma,\delta)=\mathcal{O}(\delta)$ , then we 
derive the standard convergence rates
\begin{equation*}
B_{\mathcal{R}}(\check{\vu}_k,\udag) =   \mathcal{O}\left(\delta_k\right).
\end{equation*}
Especially, if we choose $\mathcal{R}(\cdot)=\norm{\cdot}_1^2,$ then we have
\[B_{\mathcal{R}}(\check{\vu}_k,\udag) =\norm{\check{\vu}_k-\udag}_1^2 = \mathcal{O}\left(\delta_k\right).\]
\end{rem}
Theorem \ref{thm:Convergence} is a generalization of Theorem \ref{thm:directreg}. Actually Theorem \ref{thm:directreg} is a trivial case of Theorem \ref{thm:Convergence} when $\sigma \equiv 0$.

In the following, we shortly discuss a case example in which $\xs{i}\; (i=1,2)$ are the Sobolev spaces on surfaces (see \cite{Heb99} for instance). 
\begin{exa}
\label{exa:Sobolev}
Let $\xs{i,}{_\sigma}:=W^{k_i,2}(\surface_{i,\sigma},\R^{r_i})$, for $i=1,2$, $k_i \in \N$ and $r_i \in \R^+$,
\begin{equation}
\label{eq:Sobolev}
m_{i,\sigma}\in  W^{k_i,\infty}(\Omega_i,\surface_{i,\sigma}) \; \text{ and }\;
m_{i,\sigma}^{-1} \in  W^{k_i,\infty}(\surface_{i,\sigma},\Omega_i).
\end{equation}
We have $T_{i,\sigma}$ and $T_{i,\sigma}^{-1}$ are bounded.
Moreover, let
\begin{equation}
\label{eq:gamma}
\gamma_i(\sigma):=\norm{\parametrization_{i,\sigma} - \parametrization_i}_{W^{k_i,\infty}(\Omega_i)}\;
\end{equation}
and assume that $\gamma_i : \R^+ \to \R$ is uniformly bounded, monotonically increasing,
and satisfies the convergence $\gamma_i(\sigma) \to 0$ as $\sigma \to 0$\,. 
For a general operator equation \eqref{eq:linear} of which the data satisfies \eqref{eq:delta},
we consider its Tikhonov regularization approximation \eqref{eq:Tik} with $\mathcal{R}(\cdot)=\norm{\cdot}_{\xs{1}}^2$.
Then \eqref{eq:C}, \eqref{eq:rsigma} and \eqref{eq:tandata} are also fulfilled.
\end{exa}
\begin{rem}
In particular if $k_i=2$ and $m_{i,\sigma}$ is a uniform cubic spline approximation with grid size $h<1$ of $m_i$ which is $C^2$-smooth, then 
the condition \eqref{eq:Sobolev} holds and the estimate \eqref{eq:gamma} can have an explicitly form, see for instance \cite{AhlNil63}, that is
\begin{equation}
\label{eq:gamma_h}
\gamma_i(h):=\norm{\parametrization_{i,h} - \parametrization_i}_{W^{2,\infty}(\Omega_i)} = \mathcal{O}(h) \;.
\end{equation}
\end{rem}
\begin{rem}
Note the condition \eqref{eq:rhosigma} has to be further checked for individual operators $F$ and $F_\sigma$.
The source condition \eqref{eq:sourcecondition} may require more smoothness on the non-disturbed surfaces $\surface_1$ and $\surface_2$.
For example: if $F:W^{1,2}(\surface_1)\rightarrow L^2(\surface_1)$ is the embedding operator,
and $\mathcal{R}(\cdot)=\abs{\cdot}_{W^{1,2}(\surface_1)}$,
then condition \eqref{eq:sourcecondition} reads as
\[ \Delta_{\surface} \udag \in L^2(\surface_1), \]
where $\Delta_{\surface}$ denotes the Laplace--Beltrami operator.
It asks that $\parametrization_1 \in W^{2,\infty}(\Omega_1)$,
which is $one$ order higher regularity than $W^{1,\infty}(\Omega_1)$ given in \eqref{eq:Sobolev}.
\end{rem}

\section{Application to recover vector fields in ambient spaces}
\label{sec:vectorfields}
In this section, we are studying an ill-posed operator equation, of which the solution
is a functions with range in the vector bundle over a surface.
A typical case is like a \emph{tangent vector field} $\widetilde{\vu}:\surface \rightarrow \mathcal{T}\surface$ (see Appendix \ref{appendix:background}).
As a consequence, in case of surface perturbations, also the range of the function $\widetilde{\vu}$ is perturbed 
and we get the approximation
\[  \widetilde{\vu_\sigma} :\surface_\sigma \rightarrow \mathcal{T}\surface_\sigma. \]
To take this into account we consider vector fields represented by the basis in \emph{ambient space} $\R^{d+1}$ of the surface $\surface$,
which consist of vector valued functions $\vu:\surface \rightarrow \R^{d+1}$.

We start by introducing some appropriate function spaces. The relevant geometric notations are summarized in 
Table \ref{tab:Geo_Notation} from the Appendix \ref{appendix:background}.

\subsection{Spaces of functions with range in the vector bundle}
\label{sec:spaces}
Before introducing the function spaces we outline a basic assumption first:
\begin{assu}
\label{ass:general}
Let $\surface\in \R^{d+1}$ be a $d$-dimensional, differentiable surface, such that the \emph{surface gradient} 
(cf. Definition \ref{de:gradient}) of the unit normal vector $\normal$ of the surface satisfies 
\begin{equation}
 \label{eq:smooth_ma}
 \norm{\nabla_\surface \normal}_{L^\infty(\surface)} \leq C_c,
\end{equation}
for some appropriate constant $C_c$. Here 
\[\norm{\nabla_\surface \normal}_{L^\infty(\surface)}:=\sup_{x\in \surface} \set{\abs{\nabla_\surface \normal(x)}}\]
and $\abs{\cdot}$ denotes the Frobenius norm of a matrix.
\end{assu}
In fact, \eqref{eq:smooth_ma} is a uniform bound on the extrinsic curvature of the surface $\surface$.
The surface gradient operator $\nabla_\surface \cdot $ should not be confused with the covariant derivative $\widetilde{\nabla} \cdot$ 
(see Definition \ref{de:covariant} in Appendix). Note that the latter is only defined for functions with range in the tangent bundle and does not involve the metric of the surface.
$\gradientM \normal$ is sometimes referred as \emph{shape operator} in the literature.

\begin{defi}
\label{def:spaces}
Let $\leb{2}$ denote the space of square integrable, scalar, vector, and matrix valued functions on $\surface$.
The inner products and norms are defined by
\begin{equation*}
   \inner{U_1}{U_2}_{\leb{2}} := \int_\surface U_1 \cdot U_2\,ds(x),\quad \norm{U}_{\leb{2}}^2 = \inner{U}{U}_{\leb{2}}, 
\end{equation*}
respectively. Here $s(x)$ denotes the $d$-dimensional surface measure. 
Note that if $U_1$, $U_2$ are scalar valued functions, then $\cdot$ denotes multiplication of numbers and for vectors and 
matrices $\cdot$ denotes component wise multiplication. $\abs{\cdot}$ (without any subscript) 
denotes the Euclidean norm of a vector or the Frobenius norm of a matrix.

We define the sets
\begin{equation}
       \label{eq:h1}
       \hspace{-0.02\textwidth}
       \begin{aligned}
        \htm{\surface} &:= 
        \left\{ \tv : \surface \rightarrow \R^{d+1}: \tv(x) \in \mathcal{T}_x\surface,\, \forall x \in \surface,\; 
                    \norm{\tv}_{\htm{\surface}} < \infty \right\},\\
        \hn{\surface} &:= 
        \left\{ \vn : \surface \rightarrow \R^{d+1}: \vn(x) \in \mathcal{N}_x\surface,\, \forall  x \in \surface,\;
                    \norm{\vn}_{\hn{\surface}} < \infty \right\},\\
        \hone{\surface} &:= \left\{ \vu=\tv+\vn: \tv \in \htm{\surface},\,
        \vn \in {\hn{\surface}},\, \norm{\vu}_{\hone{\surface}} < \infty \right\}, \\
        \Hone{\surface} &:= 
        \set{\vu : \surface \rightarrow \R^{d+1} : \norm{\vu}_{\Hone{\surface}} < \infty},
        \end{aligned}
       \end{equation}
       where $\mathcal{T}_x\surface$ and $\mathcal{N}_x\surface$ are the tangent and normal spaces, respectively 
       (see Appendix \ref{appendix:background}).
       
       The associated inner products and norms for 
       \[A\in \set{\htm{\surface}\;,\hn{\surface}\;, \hone{\surface}\;,\Hone{\surface}},\]
       respectively, are defined by
       \begin{equation}
       \label{eq:hnorm}
       \begin{aligned}
       \inner{\vu_1}{\vu_2}_A &:= \inner{\vu_1}{\vu_2}_{\leb{2}}+ \inner{U_1}{U_2}_{\leb{2}},\\
       \norm{\vu_1}_A^2 &:= \inner{\vu_1}{\vu_1}_A \text{ and }
       \abs{\vu_1}_A := \norm{U_1}_{\leb{2}},
       \end{aligned}
       \end{equation}
       where 
       \begin{equation*}
         U_i = \left\{ \begin{array}{rl}
                         \Ptm \gradientM  \vu_i & \text{ for } \vu_i\in A=\htm{\surface},\\
                         \Pn \gradientM  \vu_i & \text{ for } \vu_i\in A=\hn{\surface},\\
                         \Ptm \gradientM (\Ptm \vu_i) + \Pn  \gradientM (\Pn\vu_i) & \text{ for } \vu_i\in A=\hone{\surface},\\
                         \gradientM \vu_i & \text{ for } \vu_i\in A=\Hone{\surface},
                       \end{array} \right.\quad i=1,2,
       \end{equation*}
      where the definitions of $\Ptm$ and $\Pn$ can be found in Appendix \ref{appendix:background} \eqref{eq:pn}.
\end{defi}

In the following we prove that these spaces are in fact Hilbert spaces. Moreover, we prove some equivalent relations to the standard 
Sobolev space $\Hone{\surface}$ which is another way to denote the space $W^{1,2}(\surface,\R^{d+1})$ (c.f. Example \ref{exa:Sobolev}).
\begin{lem}
\label{le:directsum}
Let $\hone{\surface}$, $\htm{\surface}$ and $\hn{\surface}$ as defined in \eqref{eq:h1}.
Then $\hone{\surface}$ is the direct sum of $\htm{\surface}$ and $\hn{\surface}$, that is, 
\begin{equation*}\hone{\surface}=\htm{\surface}\oplus \hn{\surface}.\end{equation*}
\end{lem}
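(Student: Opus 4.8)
The plan is to reduce the statement to the pointwise orthogonal splitting $\R^{d+1} = \mathcal{T}_x\surface \oplus \mathcal{N}_x\surface$, which holds at every $x \in \surface$ because $\surface$ is a $d$-dimensional hypersurface. Writing $\Ptm$, $\Pn$ for the associated orthogonal projections (Appendix \eqref{eq:pn}), I would exploit the identities $\Ptm + \Pn = \id$, $\Ptm\Pn = \Pn\Ptm = 0$ and $\Ptm^T = \Ptm$, $\Pn^T = \Pn$. To establish a direct sum I must show two things: (i) every $\vu \in \hone{\surface}$ is a sum of an element of $\htm{\surface}$ and one of $\hn{\surface}$, with both summands lying in $\hone{\surface}$; and (ii) $\htm{\surface} \cap \hn{\surface} = \set{0}$, so that the decomposition is unique.

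For (i), note first that the defining condition of $\hone{\surface}$ already exhibits each $\vu$ as $\tv + \vn$ with $\tv \in \htm{\surface}$ and $\vn \in \hn{\surface}$; the remaining work is to verify that $\htm{\surface}$ and $\hn{\surface}$ embed into $\hone{\surface}$, and the engine for this is a Pythagorean identity for the norms. Given $\tv \in \htm{\surface}$ and $\vn \in \hn{\surface}$, pointwise $\Ptm\vn = 0$ and $\Pn\tv = 0$, so for $\vu = \tv+\vn$ the matrix field $U$ in \eqref{eq:hnorm} is $U = \Ptm\gradientM\tv + \Pn\gradientM\vn$. The cross term in $\norm{U}_{\leb{2}}^2$ vanishes, since pointwise $\trace\big((\Ptm\gradientM\tv)^T \Pn\gradientM\vn\big) = \trace\big((\gradientM\tv)^T \Ptm\Pn\,\gradientM\vn\big) = 0$ using $\Ptm^T=\Ptm$ and $\Ptm\Pn = 0$; likewise the vector fields $\tv$ and $\vn$ are pointwise orthogonal, so their $\leb{2}$ cross term vanishes as well. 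Hence
\[
\norm{\tv+\vn}_{\hone{\surface}}^2 = \norm{\tv}_{\htm{\surface}}^2 + \norm{\vn}_{\hn{\surface}}^2,
\]
which shows simultaneously that finite-norm tangential and normal parts sum to an element of $\hone{\surface}$ and that $\htm{\surface}, \hn{\surface} \subseteq \hone{\surface}$. In particular $\hone{\surface} = \htm{\surface} + \hn{\surface}$.

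For (ii), if $\vw \in \htm{\surface} \cap \hn{\surface}$ then $\vw(x) \in \mathcal{T}_x\surface \cap \mathcal{N}_x\surface = \set{0}$ for almost every $x$, so $\vw = 0$; equivalently, if $\tv_1+\vn_1 = \tv_2+\vn_2$, applying $\Ptm$ and $\Pn$ forces $\tv_1=\tv_2$ and $\vn_1=\vn_2$, giving uniqueness of the decomposition. Combining (i) and (ii) yields $\hone{\surface} = \htm{\surface} \oplus \hn{\surface}$. I expect the only genuine obstacle to be bookkeeping: confirming that for a general $\vu \in \hone{\surface}$ the parts $\Ptm\vu$ and $\Pn\vu$ individually carry finite seminorms — which is exactly what the cross-term cancellation above guarantees — and checking that the seminorm term defining $\htm{\surface}$, namely $\Ptm\gradientM(\Ptm\vu)$, and the one defining $\hn{\surface}$, namely $\Pn\gradientM(\Pn\vu)$, coincide with the two summands of $U$ in the $\hone{\surface}$ norm, so that no hidden coupling between the tangential and normal components survives.
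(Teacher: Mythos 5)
Your proposal is correct and follows essentially the same route as the paper: both obtain the set equality $\hone{\surface}=\htm{\surface}+\hn{\surface}$ directly from the definition together with the pointwise projections $\Ptm\vu$, $\Pn\vu$, and both reduce directness to the pointwise identities $\Ptm\vn=0$, $\Pn\tv=0$. Your Pythagorean identity with the vanishing cross terms is just the paper's computation $\inner{\tv}{\vn}_{\hone{\surface}}=0$ written out explicitly, and the additional check that $\htm{\surface}\cap\hn{\surface}=\set{0}$ is a harmless (already implied) supplement.
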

\begin{proof}
First we prove that $\htm{\surface} + \hn{\surface} = \hone{\surface}$.
By definition $\htm{\surface} + \hn{\surface} \subset \hone{\surface}$. On the other hand
Let $\tv= \Ptm \vu$  and $\vn = \Pn \vu$, then $\vu = \tv + \vn$, and thus 
$\hone{\surface} \subset \htm{\surface} + \hn{\surface}$.

It remains to prove that $\htm{\surface}$ and $\hn{\surface}$ are orthogonal with respect 
to $\norm{\cdot}_{\hone{\surface}}$. For every $\tv \in\htm{\surface}$ and $\vn\in \hn{\surface}$, we have
\begin{equation*}
  \inner{\tv}{\vn}_{\hone{\surface}} =\inner{\Ptm \tv(x)}{ \underbrace{\Ptm \vn(x)}_{=0}}_{\htm{\surface}}+
                             \inner{\underbrace{\Pn \tv(x)}_{=0}}{\Pn \vn(x)}_{\hn{\surface}}.
\end{equation*}
Thus, also the orthogonality is proven.
\end{proof}
In the following we prove auxiliary results, which show embeddings of the space $\hone{\surface}$.

\begin{lem}
\label{le:proj}
Let Assumption \ref{ass:general} hold. Then 
the projection operators $\Ptm$ and $\Pn$ are continuous from the standard Sobolev space $\Hone{\surface}$ 
to $\htm{\surface}$ and $\hn{\surface}$, and in particular satisfy:
 \begin{equation}
  \label{eq:pr_l2}
  \max\set{\norm{\Ptm \vu}_{\hone{\surface}}^2,\norm{\Pn \vu}_{\hone{\surface}}^2}\leq (1+(1+C_c)^2)\abs{\vu}^2_{\Hone{\surface}}.
 \end{equation}
 \end{lem}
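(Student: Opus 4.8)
The plan is to reduce the claim to a pointwise estimate on $\surface$ and then integrate. First I would observe that $\Ptm\vu$ is everywhere tangent and $\Pn\vu$ everywhere normal, so by the case distinction in the definition of the $\hone{\surface}$-norm,
\[
\norm{\Ptm\vu}_{\hone{\surface}}^2=\norm{\Ptm\vu}_{\leb2}^2+\norm{\Ptm\gradientM(\Ptm\vu)}_{\leb2}^2,\quad
\norm{\Pn\vu}_{\hone{\surface}}^2=\norm{\Pn\vu}_{\leb2}^2+\norm{\Pn\gradientM(\Pn\vu)}_{\leb2}^2 .
\]
Because $\Ptm$ and $\Pn$ are pointwise orthogonal projections we immediately get $\abs{\Ptm\vu}\le\abs{\vu}$ and $\abs{\Pn\vu}\le\abs{\vu}$, which dispose of the zeroth-order terms; all the content sits in the two gradient terms.

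The central step is a Leibniz-rule computation for the matrix-valued surface gradient. Writing $\Ptm=\id-\normal\otimes\normal$, $\Pn=\normal\otimes\normal$ and $\phi:=\normal\cdot\vu$, so that $\Pn\vu=\phi\,\normal$, I would expand $\gradientM(\phi\,\normal)=\normal\otimes\gradientM\phi+\phi\,\gradientM\normal$ and then project on the left. The key algebraic fact is $\Pn\gradientM\normal=0$, which follows from $\abs{\normal}\equiv1$ (hence $\normal^{\mathsf T}\gradientM\normal=\tfrac12\gradientM(\abs{\normal}^2)=0$); together with $\Ptm\normal=0$ this yields the clean identities
\[
\Ptm\gradientM(\Ptm\vu)=\Ptm\gradientM\vu-\phi\,\Ptm\gradientM\normal,\qquad
\Pn\gradientM(\Pn\vu)=\normal\otimes\gradientM\phi .
\]

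Next I would estimate both right-hand sides in the Frobenius norm. Using $\abs{\Ptm M}\le\abs{M}$ for the orthogonal projection, $\abs{\normal\otimes v}=\abs{v}$, $\abs{\phi}=\abs{\normal\cdot\vu}\le\abs{\vu}$, the shape-operator bound $\abs{\gradientM\normal}\le C_c$ from Assumption~\ref{ass:general}, and the product rule $\gradientM\phi=(\gradientM\normal)^{\mathsf T}\vu+(\gradientM\vu)^{\mathsf T}\normal$, one finds that both gradient terms are bounded pointwise by $\abs{\gradientM\vu}+C_c\abs{\vu}$. Hence, pointwise,
\[
\abs{\Ptm\vu}^2+\abs{\Ptm\gradientM(\Ptm\vu)}^2\le \abs{\vu}^2+\bigl(\abs{\gradientM\vu}+C_c\abs{\vu}\bigr)^2,
\]
and likewise for $\Pn$.

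The proof then closes with the elementary inequality $b^2+(a+C_c b)^2\le\bigl(1+(1+C_c)^2\bigr)(a^2+b^2)$ with $a=\abs{\gradientM\vu}$, $b=\abs{\vu}$; its difference is the positive semidefinite quadratic form $(1+C_c)^2a^2+(1+2C_c)b^2-2C_c\,ab$ (determinant $(1+C_c)^2(1+2C_c)-C_c^2>0$). Integrating over $\surface$ gives the asserted bound, with the full norm $\norm{\vu}_{\Hone{\surface}}^2=\norm{\vu}_{\leb2}^2+\norm{\gradientM\vu}_{\leb2}^2$ naturally appearing on the right (the zeroth-order contribution $\norm{\Ptm\vu}_{\leb2}^2$ cannot be absorbed into the seminorm $\abs{\vu}_{\Hone{\surface}}$ alone). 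The main obstacle is the bookkeeping in the Leibniz step — in particular verifying $\Pn\gradientM\normal=0$ and correctly applying the left projections to matrix-valued fields; once those identities are in place the norm estimates are routine.
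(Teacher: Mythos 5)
Your argument is correct and follows essentially the same route as the paper: the two Leibniz identities you derive are exactly Lemma \ref{le:Pro_gradient} (your $\normal\otimes\gradientM\phi$ form of $\Pn\gradientM(\Pn\vu)$ equals the paper's $\Pn\gradientM\vu+\normal\vu^{T}\gradientM\normal$ after expanding $\gradientM\phi$), and the curvature bound and the final combination with constant $1+(1+C_c)^2$ coincide, whether carried out pointwise and integrated as you do or at the level of $L^2$ norms as in the paper. You are also right that the right-hand side of \eqref{eq:pr_l2} should carry the full norm $\norm{\vu}_{\Hone{\surface}}^2$ rather than the seminorm, which is what the paper's own proof actually establishes.
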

\begin{proof}
From the definition of \eqref{eq:hnorm} it follows that 
\begin{equation*}
 \abs{\Ptm \vu}_{\hone{\surface}} = \norm{\Ptm  \gradientM(\Ptm \vu) + \Pn  \gradientM(\Pn \Ptm \vu)}_{\leb{2}}
 = \norm{\Ptm  \gradientM(\Ptm \vu)}_{\leb{2}}.
\end{equation*}
Then, by using Lemma \ref{le:Pro_gradient}, and triangle inequality it follows that
\begin{equation}
\label{eq:est0}
\begin{aligned}
 \abs{\Ptm \vu}_{\hone{\surface}} 
 =& \norm{\Ptm\gradientM \vu-(\normal^T\vu)\gradientM \normal}_{\leb{2}}\\
 \leq & \norm{\Ptm\gradientM \vu}_{\leb{2}} + \norm{(\normal^T\vu)\gradientM \normal}_{\leb{2}}\;.
\end{aligned}
\end{equation}
Because, for $x \in \surface$, the Frobenius-norm of $(\normal^T(x)\vu(x))\gradientM \normal(x)$ satisfies
\begin{equation*}
\abs{(\normal^T(x)\vu(x))\gradientM \normal(x)}
=  \abs{\normal^T(x)\vu(x)} \abs{\gradientM \normal(x)}\,,
\end{equation*}
it follows from \eqref{eq:smooth_ma} and Cauchy-Schwarz inequality on $\R^{d+1}$ that
\begin{equation}
\label{eq:est}
 \begin{aligned}
  \norm{(\normal^T\vu)\gradientM \normal}_{\leb{2}}^2 &= \int_\surface \abs{(\normal^T\vu)\gradientM \normal}^2 ds(x)\\
  &\leq  \norm{\nabla_\surface \normal}^2_{L^\infty(\surface)}\int_\surface \abs{\inner{\normal}{\vu}}^2\,ds(x)\\
  &\leq  C_c^2 \int_\surface \abs{\vu}^2\,ds(x)\;.
 \end{aligned}
\end{equation}
Using \eqref{eq:est} and \eqref{eq:AA} in \eqref{eq:est0} then shows that
\begin{equation*}
\abs{\Ptm \vu}_{\hone{\surface}} \leq  
\norm{\gradientM \vu}_{\leb{2}} + C_c\norm{\vu}_{\leb{2}}
\leq (1+ C_c)\norm{\vu}_{\Hone{\surface}}.
\end{equation*} 
From this it directly follows that
\begin{equation*}
\norm{\Ptm \vu}_{\hone{\surface}}^2 =\norm{\Ptm \vu}_{\leb{2}}^2+ \abs{\Ptm \vu}_{\hone{\surface}}^2\leq (1+(1+ C_c)^2)\norm{\vu}_{\Hone{\surface}}^2.
\end{equation*}
Moreover, from the definition of $\Hone{\surface}$, \eqref{eq:hnorm}, it follows that
\begin{equation}
\label{eq:I}
  \abs{\Pn \vu}_{\hone{\surface}} = \norm{\Pn \gradientM \Pn \vu}_{\leb{2}}\;.
\end{equation}
Then, by using Lemma \ref{le:Pro_gradient}, triangle inequality, and the fact that the Frobenius norm is sub-multiplicative
it follows that 
\begin{equation}
\label{eq:II}
 \begin{aligned}
 \norm{\Pn \gradientM \Pn \vu}_{\leb{2}} &= \norm{\Pn \gradientM \vu + n\vu^T \gradientM \normal}_{\leb{2}}\\
 &\leq \norm{\Pn \gradientM \vu}_{\leb{2}} + 
       \sqrt{\int_\surface \abs{\normal \vu^T \gradientM  n}^2 \,d s(x)}\\
 &\leq \norm{\Pn \gradientM \vu}_{\leb{2}} + 
       \sqrt{\int_\surface \abs{\normal \vu^T }^2 \abs{\gradientM n}^2 \,d s(x)}\\      
 &\leq \norm{\Pn \gradientM \vu}_{\leb{2}} + C_c 
       \sqrt{\int_\surface \abs{\normal \vu^T }^2 \,d s(x)}\;.
\end{aligned}
\end{equation} 
By using that for a matrix $\normal \vu^T$ the spectral and the Frobenius norm are identical and satisfy 
$\abs{\normal \vu^T} = \abs{\normal} \abs{\vu} = \abs{\vu}$ \cite{GolVan96}, we get from \eqref{eq:II} 
\begin{equation}
\label{eq:III}
 \norm{\Pn \gradientM \Pn \vu}_{\leb{2}} \leq \norm{\gradientM \vu}_{\leb{2}} + C_c \norm{\vu}_{\leb{2}}\;.
\end{equation} 
Combining \eqref{eq:I} and \eqref{eq:III}, it then follows that 
\begin{equation*}
  \abs{\Pn \vu}_{\hone{\surface}} \leq   (1+ C_c)\norm{\vu}_{\Hone{\surface}}.
\end{equation*}
In summary, we have
\begin{equation*}
\norm{\Pn \vu}^2_{\hone{\surface}} =\norm{\Pn \vu}^2_{\leb{2}}+ \abs{\Pn \vu}^2_{\hone{\surface}}\leq (1+(1+ C_c)^2)\norm{\vu}^2_{\Hone{\surface}}.
\end{equation*}
\end{proof}
Moreover, we have the equivalence between $\hone{\surface}$ and $\Hone{\surface}$.
\begin{lem}
\label{le:normbound}
Let Assumption \ref{ass:general} hold. Then $\norm{\cdot}_{\hone{\surface}}$ and $\norm{\cdot}_{\Hone{\surface}}$
(see \eqref{eq:h1}) are equivalent on $\hone{\surface}$.
\end{lem}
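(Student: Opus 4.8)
The plan is to produce constants $c,C>0$ with $c\norm{\vu}_{\Hone{\surface}}\le\norm{\vu}_{\hone{\surface}}\le C\norm{\vu}_{\Hone{\surface}}$ for every $\vu\in\hone{\surface}$. The first observation is that both norms carry the \emph{same} zeroth-order term $\norm{\vu}_{\leb{2}}^2$ (compare \eqref{eq:h1}, \eqref{eq:hnorm}), so the whole statement reduces to a two-sided comparison of the first-order seminorms $\norm{\gradientM\vu}_{\leb{2}}$, appearing in $\Hone{\surface}$, and $\abs{\vu}_{\hone{\surface}}=\norm{U}_{\leb{2}}$ with $U=\Ptm\gradientM(\Ptm\vu)+\Pn\gradientM(\Pn\vu)$, appearing in $\hone{\surface}$. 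One of the two inequalities is essentially already assembled: by Lemma \ref{le:directsum} the subspaces $\htm{\surface}$ and $\hn{\surface}$ are orthogonal in $\hone{\surface}$, so $\norm{\vu}_{\hone{\surface}}^2=\norm{\Ptm\vu}_{\hone{\surface}}^2+\norm{\Pn\vu}_{\hone{\surface}}^2$, and Lemma \ref{le:proj} bounds each summand by $(1+(1+C_c)^2)\norm{\vu}_{\Hone{\surface}}^2$; adding the two gives $\norm{\vu}_{\hone{\surface}}\le C\norm{\vu}_{\Hone{\surface}}$.

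The harder direction is $\norm{\vu}_{\Hone{\surface}}\le c^{-1}\norm{\vu}_{\hone{\surface}}$, since here I must recover the full surface gradient $\gradientM\vu$ from the projected quantity $U$, which at first sight discards the mixed tangential/normal blocks of the gradient. The device I would use is an algebraic cancellation. Summing the two pointwise identities already exploited in the proof of Lemma \ref{le:proj} (these are the content of Lemma \ref{le:Pro_gradient}), namely $\Ptm\gradientM(\Ptm\vu)=\Ptm\gradientM\vu-(\normal^T\vu)\gradientM\normal$ and $\Pn\gradientM(\Pn\vu)=\Pn\gradientM\vu+\normal\vu^T\gradientM\normal$, and using $\Ptm+\Pn=\id$, I obtain the single identity
\[
U = \gradientM\vu - (\normal^T\vu)\,\gradientM\normal + \normal\vu^T\gradientM\normal .
\]
This is the crux: the two half-projected gradients recombine to the full gradient $\gradientM\vu$, up to exactly two curvature correction terms.

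It then remains to control those two corrections in $\leb{2}$. By the curvature bound \eqref{eq:smooth_ma}, the Cauchy--Schwarz inequality and submultiplicativity of the Frobenius norm — precisely the estimates carried out in \eqref{eq:est} and \eqref{eq:II} — each of $\norm{(\normal^T\vu)\gradientM\normal}_{\leb{2}}$ and $\norm{\normal\vu^T\gradientM\normal}_{\leb{2}}$ is at most $C_c\norm{\vu}_{\leb{2}}$. Applying the triangle inequality to the displayed identity in both directions then yields the clean estimate $\bigl|\,\norm{\gradientM\vu}_{\leb{2}}-\abs{\vu}_{\hone{\surface}}\,\bigr|\le 2C_c\norm{\vu}_{\leb{2}}$. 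Squaring, combining with the shared term $\norm{\vu}_{\leb{2}}^2$, and using $(a+b)^2\le 2a^2+2b^2$ closes both inequalities with explicit constants depending only on $C_c$.

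I expect the only genuine obstacle to be spotting this cancellation: one has to resist treating the tangential and normal parts separately (which loses information) and instead add the two Lemma \ref{le:Pro_gradient} identities so that $\Ptm+\Pn=\id$ restores the entire gradient, leaving only the shape-operator terms that are uniformly controlled by Assumption \ref{ass:general}. Once the identity is in hand, everything else — indeed both directions of the equivalence simultaneously — is routine triangle-inequality bookkeeping, and the separate upper-bound argument via Lemmas \ref{le:directsum} and \ref{le:proj} is then merely an alternative, slightly less symmetric, route to the same conclusion.
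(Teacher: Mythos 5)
Your proposal is correct and follows essentially the same route as the paper: the upper bound via Lemma \ref{le:directsum} and Lemma \ref{le:proj}, and the lower bound via the summed identity of Lemma \ref{le:Pro_gradient} (which is exactly its second item, $\abs{\vu}_{\hone{\surface}}=\norm{\gradientM\vu-(\normal^T\vu)\gradientM\normal+\normal\vu^T\gradientM\normal}_{\leb{2}}$) together with the curvature bound and the triangle inequality. The only cosmetic difference is that you state the two-sided estimate symmetrically, whereas the paper proves the two inequalities separately.
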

\begin{proof}
We first show that 
\begin{equation*}
\norm{\vu}_{\hone{\surface}} \leq  \sqrt{2(1+(1+ C_c)^2)}\norm{\vu}_{\Hone{\surface}},\quad \forall \vu \in \hone{\surface}. 
\end{equation*}
This follows from Assumption \ref{ass:general}, Definition \ref{def:spaces} and Lemma \ref{le:proj} together with
\begin{equation*}
\norm{\vu}^2_{\hone{\surface}}=\norm{\Ptm \vu}^2_{\hone{\surface}}+\norm{\Pn \vu}^2_{\hone{\surface}}\leq 2(1+(1+ C_c)^2)\norm{\vu}^2_{\Hone{\surface}}.
\end{equation*}
Now, we show that
\begin{equation*}
\norm{\vu}_{\Hone{\surface}} \leq \sqrt{(4C_c^2+2)}\norm{\vu}_{\hone{\surface}}. 
\end{equation*}
Since $\Ptm \vu+\Pn \vu=\vu$, and by orthogonality of $\Ptm \vu$ and $\Pn \vu$ in $\leb{2}$ it follows that
\begin{equation*}
\norm{\vu}^2_{\leb{2}}=\norm{\Ptm \vu}^2_{\leb{2}}+\norm{\Pn \vu}^2_{\leb{2}}.
\end{equation*}
Then, from Lemma \ref{le:Pro_gradient}, it follows that
 \begin{equation*}
 \Ptm\gradientM (\Ptm \vu) + \Pn\gradientM (\Pn \vu)=\gradientM \vu-\normal^T \vu\gradientM \normal +\normal \vu^T\gradientM \normal.
 \end{equation*}
and therefore from the definition of $\hone{\surface}$ \eqref{eq:hnorm} it follows that
\begin{equation*}
\abs{\vu}^2_{\hone{\surface}}=\norm{\gradientM \vu -\normal^T \vu\gradientM \normal +\normal \vu^T\gradientM \normal}^2_{\leb{2}}.
\end{equation*}
Then by triangle inequality, and using the estimates used in \eqref{eq:II} in Lemma \ref{le:proj}, it follows that
\begin{eqnarray*}
 \norm{\gradientM \vu}_{\leb{2}} 
&\leq & \norm{\gradientM \vu -\normal^T \vu\gradientM \normal +\normal \vu^T\gradientM \normal}_{\leb{2}}  \\
& & \quad + 
\norm{\normal^T \vu\gradientM \normal} + \norm{\normal \vu^T\gradientM \normal}_{\leb{2}} \\
&\leq &\abs{\vu}_{\hone{\surface}}+ 2C_c\norm{\vu}_{\leb{2}}.
\end{eqnarray*}
Hence, $\norm{\gradientM \vu}_{\leb{2}}^2\leq  \abs{\vu}_{\hone{\surface}}^2+ 4C_c^2\norm{\vu}_{\leb{2}}^2,$
and we get the estimate \[\norm{\vu}^2_{\Hone{\surface}}\leq  (4C_c^2+2)\norm{\vu}^2_{\hone{\surface}}.\]
\end{proof}
\begin{rem}[on Lemma \ref{le:normbound}]
\label{rem:spaces}
Note that the equivalence of the norms $\norm{\cdot}_{\Hone{\surface}}$ and $\norm{\cdot}_{\hone{\surface}}$ of the 
spaces $\hone{\surface}$ and $\Hone{\surface}$ is based on the uniform boundedness of the curvature of $\surface$ 
(cf. Assumption \ref{ass:general}).

While Lemma \ref{le:normbound} guarantees equivalence of the norms, this does not induce equivalence of the seminorms 
$\abs{\cdot}_{\Hone{\surface}}$ and $\abs{\cdot}_{\hone{\surface}}$.
\end{rem}
With the discussion above, we can conclude that the spaces introduced in Definition \ref{def:spaces} are actually Hilbert spaces.
\begin{thm}
The spaces $\hone{\surface}$, $\htm{\surface}$ and $\hn{\surface}$ associated with the inner products $\inner{\cdot}{\cdot}_{\hone{\surface}}$, $\inner{\cdot}{\cdot}_{\htm{\surface}}$ and $\inner{\cdot}{\cdot}_{\hn{\surface}}$, respectively, are Hilbert spaces.
\end{thm}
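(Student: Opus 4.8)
The plan is to show each space is a complete inner product space, reducing everything to completeness of the standard Sobolev space $\Hone{\surface}=W^{1,2}(\surface,\R^{d+1})$, which is itself a Hilbert space. First I would record that the bilinear forms $\inner{\cdot}{\cdot}_A$ from \eqref{eq:hnorm} are genuine inner products: symmetry and bilinearity are inherited from the $\leb{2}$ inner product, while positive definiteness follows because $\inner{\vu}{\vu}_A=\norm{\vu}_{\leb{2}}^2+\norm{U}_{\leb{2}}^2$ already contains the full $\leb{2}$ norm of $\vu$, so $\inner{\vu}{\vu}_A=0$ forces $\vu=0$ in $\leb{2}$. Hence it remains only to establish completeness in each case.

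For $\hone{\surface}$ I would first argue that $\hone{\surface}=\Hone{\surface}$ as \emph{sets}. The inclusion $\hone{\surface}\subseteq\Hone{\surface}$ is immediate from Lemma \ref{le:normbound}, which bounds $\norm{\cdot}_{\Hone{\surface}}$ by a constant times $\norm{\cdot}_{\hone{\surface}}$. For the reverse inclusion, given $\vu\in\Hone{\surface}$, Lemma \ref{le:proj} shows $\Ptm\vu\in\htm{\surface}$ and $\Pn\vu\in\hn{\surface}$ with finite norms; since $\Ptm\vu+\Pn\vu=\vu$ pointwise, this exhibits $\vu$ in the decomposed form required by \eqref{eq:h1}, so $\vu\in\hone{\surface}$. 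With the two sets identified and their norms equivalent (Lemma \ref{le:normbound}), completeness transfers directly: any $\norm{\cdot}_{\hone{\surface}}$-Cauchy sequence is $\norm{\cdot}_{\Hone{\surface}}$-Cauchy, hence converges to some limit in the complete space $\Hone{\surface}=\hone{\surface}$, and by equivalence it also converges in $\norm{\cdot}_{\hone{\surface}}$. Thus $\hone{\surface}$ is a Hilbert space.

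For $\htm{\surface}$ and $\hn{\surface}$ I would use Lemma \ref{le:directsum} to realize them as mutually orthogonal complements inside the Hilbert space $\hone{\surface}$ and invoke the standard fact that an orthogonal complement is closed. Concretely, the orthogonality in Lemma \ref{le:directsum} gives $\htm{\surface}\subseteq(\hn{\surface})^\perp$; conversely, if $\vu\perp\hn{\surface}$ then writing $\vu=\tv+\vn$ with $\tv\in\htm{\surface}$, $\vn\in\hn{\surface}$ and testing against $\vn$ yields $\inner{\vu}{\vn}_{\hone{\surface}}=\norm{\vn}_{\hone{\surface}}^2=0$, so $\vn=0$ and $\vu=\tv\in\htm{\surface}$. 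Hence $\htm{\surface}=(\hn{\surface})^\perp$ is a closed subspace of $\hone{\surface}$, and symmetrically $\hn{\surface}=(\htm{\surface})^\perp$ is closed. A closed subspace of a Hilbert space is complete, so both are Hilbert spaces with the inherited inner products.

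The argument is largely bookkeeping once the earlier lemmas are in hand; the one step deserving care is the \emph{set} identification $\hone{\surface}=\Hone{\surface}$, as opposed to a mere continuous inclusion, since this is precisely what allows the norm equivalence of Lemma \ref{le:normbound} to transport completeness from the standard Sobolev space. Everything else is a routine application of the direct-sum structure of Lemma \ref{le:directsum} and the closedness of orthogonal complements.
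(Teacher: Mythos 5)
Your proposal is correct and follows essentially the same route as the paper: completeness of $\hone{\surface}$ is transferred from the standard Sobolev space via the norm equivalence of Lemma \ref{le:normbound}, and $\htm{\surface}$, $\hn{\surface}$ are then handled as closed orthogonal complements inside $\hone{\surface}$ using the direct-sum decomposition of Lemma \ref{le:directsum}. You simply make explicit two steps the paper leaves implicit (the set identification $\hone{\surface}=\Hone{\surface}$ via Lemma \ref{le:proj}, and the closedness argument via orthogonal complements), which is a faithful elaboration rather than a different proof.
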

\begin{proof}
The first assertion follows by the equivalence of the Sobolev space $\Hone{\surface}$ and $\hone{\surface}$. 
The second and the third ones are fulfilled because they are orthogonal subspaces of $\hone{\surface}$ and complement 
each other there. Hence $\htm{\surface}$ and $\hn{\surface}$ 
are closed subspaces and thus they are Hilbert spaces.
\end{proof}
       
\begin{rem}         
We emphasize that all the norms in the Definition \ref{def:spaces} are independent of the parametrization. 
The independence follows by the invariance of the surface gradient $\gradientM$ (cf. Lemma \ref{le:invariant}), 
as well as the projection operators $\Ptm$ and $\Pn$, with respect to the parametrizations. 

We also note that $\abs{\tv}_{\htm{\surface}}$ is a seminorm, which can be expressed via the covariant derivative 
(cf. Definition \ref{de:covariant}): 
According to \eqref{eq:cov_m_t}, we have 
       \begin{equation*} 
       \begin{aligned}
       \abs{\tv}_{\htm{\surface}}^2 &= 
       \int_\surface \abs{\Ptm(x) \gradientM  \tv(x)}^2 d s(x)\\
       &\underbrace{=}_{\eqref{eq:cov_m_t}} 
       \int_\surface \abs{(\widetilde\nabla \tv(x) (\partial m(\zeta))^\dagger)}^2 d s(x)\\
       &=
       \int_\surface \trace\left(\left(\widetilde\nabla \tv(x)  (\partial m(\zeta))^\dagger)\right)\left(\widetilde\nabla \tv(x)  (\partial m(\zeta))^\dagger)\right)^T\right) d s(x)\\
       &\underbrace{=}_{\eqref{eq:projectiona}}
       \int_\surface  \trace\left( \left(\widetilde\nabla \tv(x)\right)g^{-1} \left(\widetilde\nabla \tv(x)\right)^T \right)  d s(x),
       \end{aligned}
       \end{equation*} 
       where the $\trace$ is the trace of a matrix. 
\end{rem}
\subsection{Regularization theory for vector fields}
We proceed to discuss the regularization theory with respect to the specific spaces $\mathcal{X}_1(\surface_1)=\hone{\surface_1}$ and $\mathcal{X}_2(\surface_2)=\lebtwo{2}$, which are useful for many applications.
In practice, it is common to consider the following type of regularization functional
\begin{equation}
\label{eq:r_functional}
\mathcal{R}(\vu):=\int_{\surface_1} R(\Ptm\gradientM \Ptm\vu + \Pn\gradientM \Pn \vu)
\end{equation}
where $R(\cdot)$ is a real, non-negative, local Lipschitz and convex function, and thus 
\[\mathcal{R}:\hone{\surface_1} \rightarrow [0,+\infty]\]
is proper, convex and weakly lower semi-continuous.
Since the spaces are fixed, we can have a precise smoothness characterization on surfaces $\surface_{1,\sigma}$ and $\surface_{2,\sigma}$.
\begin{coro}
\label{coro:Surface_vector}
For every $i=1,2$, and every $\sigma \geq 0$, let \eqref{eq:Sobolev} and \eqref{eq:gamma} hold for $k_1=2,$ and $k_2=0,$ 
and let the regularization functional $\mathcal{R}$ be given as in \eqref{eq:r_functional}, then the estimates \eqref{eq:transform}--\eqref{eq:C}, \eqref{eq:rsigma} and \eqref{eq:tandata} hold for $\hone{\surface_{1,\sigma}}$ and $L^2(\surface_{2,\sigma})$, 
as well the curvature estimate \eqref{eq:smooth_ma} holds for $\surface_{1,\sigma}$.
\end{coro}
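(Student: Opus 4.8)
The plan is to treat this as a specialization of Example \ref{exa:Sobolev}: the definitions \eqref{eq:transform}, \eqref{eq:itransform}, the $L^2$-side of the boundedness \eqref{eq:C} for $L^2(\surface_{2,\sigma})$, and the data estimate \eqref{eq:tandata} are exactly the instance of that example with $k_2=0$, so no new work is needed for them. What is genuinely new, and specific to the vector-bundle space $\hone{\surface_{1,\sigma}}$, is three-fold: the uniform curvature bound \eqref{eq:smooth_ma}, the transfer of the boundedness \eqref{eq:C} from the standard Sobolev norm to $\norm{\cdot}_{\hone{\surface_{1,\sigma}}}$, and the functional estimate \eqref{eq:rsigma} for the regularizer \eqref{eq:r_functional}. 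I would handle these in turn.

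First I would prove \eqref{eq:smooth_ma} uniformly in $\sigma$. Since $\parametrization_{1,\sigma}\in W^{2,\infty}(\Omega_1)$ with $\parametrization_{1,\sigma}^{-1}\in W^{2,\infty}$ by \eqref{eq:Sobolev}, the parametrization is regular with metric bounded below uniformly in $\sigma$; the unit normal $\normal$ is algebraic in the first derivatives of $\parametrization_{1,\sigma}$, and the shape operator $\gradientM\normal$ is expressible through the first and second derivatives of $\parametrization_{1,\sigma}$. Because $\gamma_1(\sigma)=\norm{\parametrization_{1,\sigma}-\parametrization_1}_{W^{2,\infty}(\Omega_1)}$ is uniformly bounded and $\parametrization_1\in W^{2,\infty}$, these derivatives are bounded uniformly in $\sigma$, so $\norm{\gradientM\normal}_{L^\infty(\surface_{1,\sigma})}\le C_c$ with a single $C_c$ valid for all $\sigma\ge 0$. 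With this uniform $C_c$, Lemma \ref{le:normbound} holds on every $\surface_{1,\sigma}$ with equivalence constants independent of $\sigma$.

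The boundedness \eqref{eq:C} on $\hone{\surface_{1,\sigma}}$ then follows by sandwiching. Example \ref{exa:Sobolev}, applied at first order (available since $W^{2,\infty}\subset W^{1,\infty}$), gives boundedness of $T_{1,\sigma}$ and $T_{1,\sigma}^{-1}$ on the standard space $W^{1,2}(\cdot,\R^{d_1+1})=\Hone{\cdot}$; composing with the uniform norm equivalence of the previous step on both $\surface_1$ and $\surface_{1,\sigma}$ converts these into the corresponding bounds in the $\hone{\cdot}$ norms, with a constant absorbing only the (uniform) equivalence constants.

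The principal task is \eqref{eq:rsigma}. I would change variables $x=\parametrization_{1,\sigma}(\zeta)$ and $x=\parametrization_1(\zeta)$ to write both $\mathcal{R}_\sigma(T_{1,\sigma}\vu)$ and $\mathcal{R}(\vu)$ as integrals over the common domain $\Omega_1$ acting on the one coordinate function $\hat{\vu}:=\vu\circ\parametrization_1$, so that their difference becomes $\int_{\Omega_1}\bigl(R(G_\sigma)J_\sigma - R(G_0)J_0\bigr)\,d\zeta$, where $G_\sigma=\Ptm\gradientM\Ptm\vu+\Pn\gradientM\Pn\vu$ is the coordinate form of the argument computed with the geometry of $\surface_{1,\sigma}$, $G_0$ is its $\sigma=0$ counterpart, and $J_\sigma=\sqrt{\det g_\sigma}$ is the surface element. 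Each geometric factor in $G_\sigma$ and $J_\sigma$ differs from its $\sigma=0$ version by $\mathcal{O}(\gamma_1(\sigma))$ in $L^\infty$: the projections $\Ptm,\Pn$ and the element $J_\sigma$ through the first derivatives of $\parametrization_{1,\sigma}$, and the shape-operator contribution entering $\gradientM(\Ptm\vu)$ and $\gradientM(\Pn\vu)$ (via Lemma \ref{le:Pro_gradient}) through the second derivatives --- which is precisely why the $W^{2,\infty}$ control of $\parametrization_{1,\sigma}-\parametrization_1$, i.e. $k_1=2$, is required. Hence $\abs{G_\sigma-G_0}\le C\gamma_1(\sigma)\bigl(\abs{\hat{\vu}}+\abs{\partial\hat{\vu}}\bigr)$ and $\abs{J_\sigma-J_0}\le C\gamma_1(\sigma)$ pointwise, and splitting $R(G_\sigma)J_\sigma-R(G_0)J_0=\bigl(R(G_\sigma)-R(G_0)\bigr)J_\sigma+R(G_0)(J_\sigma-J_0)$, bounding the first term by the Lipschitz estimate for $R$ and integrating with Cauchy--Schwarz over $\Omega_1$, yields $\abs{\mathcal{R}_\sigma(T_{1,\sigma}\vu)-\mathcal{R}(\vu)}\le C_M\gamma_1(\sigma)$. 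The main obstacle is this last integration: since $R$ is only \emph{locally} Lipschitz while $\vu\in A_M$ controls the argument only in the $L^2$ (integral) sense and not pointwise, one cannot use a single global Lipschitz constant. I would resolve this by exploiting the convexity and non-negativity of $R$ together with the a priori bound $\mathcal{R}(\vu)\le M$ to convert the integral control of $R(G_0)$ into the required uniform estimate, so that the resulting constant $C_M$ depends only on $M$ and $C_c$.
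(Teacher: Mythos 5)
The paper states Corollary \ref{coro:Surface_vector} without proof (it is followed only by a remark), so there is no argument of the authors' to compare against; your identification of what is genuinely new beyond Example \ref{exa:Sobolev} --- the uniform curvature bound \eqref{eq:smooth_ma}, the transfer of \eqref{eq:C} to the $\hone{\cdot}$ norm, and the estimate \eqref{eq:rsigma} for the functional \eqref{eq:r_functional} --- is exactly the right reading of the statement, and your first two steps are sound. In particular, deducing a $\sigma$-uniform $C_c$ from the uniform $W^{2,\infty}$ bound on $\parametrization_{1,\sigma}$ (via boundedness of $\gamma_1$) and then sandwiching $\norm{\cdot}_{\hone{\surface_{1,\sigma}}}$ between $\Hone{\surface_{1,\sigma}}$ norms using Lemma \ref{le:normbound} with $\sigma$-independent equivalence constants is a clean and correct way to obtain \eqref{eq:C}; the only point you should make explicit is why the metric stays uniformly nondegenerate, which follows from $\parametrization_{1,\sigma}^{-1}\in W^{2,\infty}$ together with $\gamma_1(\sigma)\to 0$.

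The gap is in your third step, and you have in fact put your finger on it without resolving it. The set $A_M=\set{\vu:\mathcal{R}(\vu)\leq M}$ with $\mathcal{R}$ as in \eqref{eq:r_functional} controls only the seminorm-type quantity $\norm{G_0}_{\leb{2}}$ (in your notation), \emph{not} $\norm{\vu}_{L^2(\surface_1)}$: the functional \eqref{eq:r_functional} has a nontrivial kernel (e.g.\ on a curve, all fields with constant tangential and normal components), so $A_M$ is unbounded in $\hone{\surface_1}$. Your pointwise estimate $\abs{G_\sigma-G_0}\leq C\gamma_1(\sigma)(\abs{\hat{\vu}}+\abs{\partial\hat{\vu}})$ is correct, but after applying the (local) Lipschitz bound for $R$ and Cauchy--Schwarz you obtain at best something of the form $C\gamma_1(\sigma)\,\Phi(M)\,\norm{\vu}_{\hone{\surface_1}}$, and the last factor cannot be absorbed into a constant $C_M$ depending on $M$ alone. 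Indeed, taking $\vu=c\,\vw$ with $\vw$ in the kernel of the seminorm but with $G_\sigma(\vw)\neq 0$ (which happens for a genuinely perturbed surface, since the projections and the shape operator change) keeps $\mathcal{R}(\vu)$ fixed while $\mathcal{R}_\sigma(T_{1,\sigma}\vu)$ grows without bound in $c$; so convexity and non-negativity of $R$ cannot rescue the argument as you suggest. To close this step you need an additional a priori bound on $\norm{\vu}_{L^2(\surface_1)}$ over the class of functions to which \eqref{eq:rsigma} is actually applied in the proof of Theorem \ref{thm:Convergence} (namely $\udag$ and the minimizers $\check{\vu}_k$, whose full norms are controlled through the Tikhonov functional), or you must let $C_M$ depend on such a bound --- a caveat that should be stated explicitly, since as written the uniform estimate over all of $A_M$ does not follow from your argument.
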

\begin{rem}
We point out that in order to have compatibility between the smoothness of surfaces and the regularity of the spaces, we ask the parametrization map $m_{1,\sigma}\in W^{2,\infty}(\Omega_1)$ on the surfaces for spaces $\hone{\surface_{1,\sigma}}$, but only ask $m_{1,\sigma}\in W^{1,\infty}(\Omega_1)$ for spaces $\Hone{\surface_{1,\sigma}}$ (c.f. Example \ref{exa:Sobolev}).
\end{rem}

In company with Assumption \ref{assu:ori} and Corollary \ref{coro:Surface_vector}, we can apply the results from Theorem \ref{thm:directreg} and Theorem \ref{thm:Convergence} to the regularization of problem \eqref{eq:linear} associated with vector fields represented in ambient coordinates.
 
\subsection{Recovering tangent vector fields}
We restrict ourselves to solving problem \eqref{eq:linear}, 
where the operator $\widetilde{F}$ is applied to functions with range in the tangent bundle only.
We rewrite \eqref{eq:linear} as \eqref{eq:linear_tan} in this particular case
\begin{equation}
\label{eq:linear_tan}
\widetilde{F}\tv =\vy, \;\text{ for } \; \tv\in \widetilde{\xs{1}}.
\end{equation}
The ambient approach requires to extend the operator $\widetilde{F}$ and the associated Hilbert 
space $\widetilde{\xs{1}}$ to the space $\xs{1}$ which is for vector fields represented in ambient coordinates.

For the sake of simplicity, we select the concrete representation
\begin{equation*}
  \widetilde{\xs{1}}= \htm{\surface_1} \text{ with } 
 \widetilde{\mathcal{R}}(\cdot) = \abs{\cdot}_{\htm{\surface_1}}^2, \mathcal{R}(\cdot) = \abs{\cdot}_{\hone{\surface_1}}^2, \text{ and }
 \xs{2} = \lebtwo{2}.
\end{equation*}
For all $\tv \in \htm{\surface_1}$ we have $\Pn \tv=0$, hence it follows that 
\begin{equation*}
 \abs{\tv}_{\htm{\surface_1}}^2 = 
 \int_\surface \abs{\Ptm(x) \gradientM \Ptm(x) \vu(x)}^2\,ds(x) = \abs{\tv}_{\hone{\surface_1}}^2.  
\end{equation*}
Note that $\abs{\cdot}_{\hone{\surface_1}}^2$ is well-defined on both $\hone{\surface_1}$ and $\htm{\surface_1}$, 
and thus it can be considered as an extension of the functional $\abs{\cdot}_{\htm{\surface_1}}^2$ to ambient space.

In the following we define an ambient operator $F$ of $\widetilde{F}$ 
and the associated ambient operator equation, respectively.
\begin{defi}
\label{def:Operator_e}
The ambient operator $F:\hone{\surface_1}\to \lebtwo{2}$ is a bounded linear operator which 
extends $\widetilde{F}$ from $\htm{\surface_1}$ to $\hone{\surface_1}$. That is
\begin{equation*}
F\vu=\widetilde{F}\vu  \quad \text{for all  }\vu\in \htm{\surface_1}.
\end{equation*}
We consider solving the system of equations
\begin{equation}
\label{eq:linear_e}  \binom{F}{\Pn} \vu = \binom{\vy}{0}.
\end{equation}
We call \eqref{eq:linear_e} the \emph{ambient operator equation} for tangent vector fields.
The second equation of the system ensures that $\vu$ is tangential.
\end{defi}
Tikhonov regularization for solving the ambient operator equation \eqref{eq:linear_e} consists in minimization of the energy functional
\begin{equation}
 \label{eq:Tik_ambint}
 \mathcal{T}_{\alpha,\vy^\delta}(\vu) := \norm{F\vu-\vy^\delta}_{\lebtwo{2}}^2+\norm{\Pn \vu}_{L^2(\surface_1)}^2  + \alpha \abs{\vu}_{\hone{\surface_1}}^2.
\end{equation}
\begin{lem}
\label{le:properties}
Assume that 
$\widetilde{\mathcal{D}}(\widetilde{F})=\htm{\surface_1}$ and that Assumptions \ref{assu:ori} and \ref{ass:general} hold for \eqref{eq:linear_tan}.
Let $F$ be the extended operator introduced in Definition \ref{def:Operator_e}.
Then the following assertions hold:
\begin{enumerate}
\item $\abs{\cdot}_{\hone{\surface_1}}^2: \hone{\surface_1} \to [0,+\infty)$ is a proper, convex, and weakly lower-semi continuous 
      functional satisfying 
      $\mathcal{D} := \mathcal{D}(\abs{\cdot}_{\hone{\surface_1}}^2)\cap \mathcal{D}(F) =\hone{\surface_1}.$
\item $\tilde{\vu}_0 \in \htm{\surface_1}$ is a $\abs{\cdot}_{\htm{\surface_1}}^2$ seminorm minimizing solution of \eqref{eq:linear} if and only if it is a $\abs{\cdot}_{\hone{\surface_1}}^2$ seminorm minimizing solution of \eqref{eq:linear_e}. 
\end{enumerate}
\end{lem}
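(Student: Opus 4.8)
The plan is to treat the two assertions separately. For the first, I would verify each listed property of $\mathcal{R} = \abs{\cdot}_{\hone{\surface_1}}^2$ directly from Definition \ref{def:spaces}. Writing $L\vu := \Ptm\gradientM(\Ptm\vu) + \Pn\gradientM(\Pn\vu)$, so that $\abs{\vu}_{\hone{\surface_1}} = \norm{L\vu}_{\leb{2}}$ by \eqref{eq:hnorm}, the operator $L$ is linear and bounded from $\hone{\surface_1}$ into $\leb{2}$ with $\norm{L\vu}_{\leb{2}} \leq \norm{\vu}_{\hone{\surface_1}}$. Consequently $\mathcal{R}(\vu) = \norm{L\vu}_{\leb{2}}^2$ is finite everywhere (hence real-valued and proper); it is convex, being the composition of the seminorm $\vu \mapsto \norm{L\vu}_{\leb{2}}$ with the increasing convex map $t \mapsto t^2$ on $[0,\infty)$; and it is strongly continuous. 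Since a convex, strongly continuous functional on a Hilbert space is weakly sequentially lower semi-continuous, this supplies the required semicontinuity. Because $\mathcal{R}$ is finite everywhere and $F$ is a bounded linear operator defined on all of $\hone{\surface_1}$, both domains equal $\hone{\surface_1}$, and hence $\mathcal{D} = \hone{\surface_1}$.

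For the second assertion, the key step is to identify the feasible sets of the two equations. I would first observe that for $\vu \in \hone{\surface_1}$ the second row of \eqref{eq:linear_e}, namely $\Pn\vu = 0$, holds if and only if $\vu \in \htm{\surface_1}$ (by Lemma \ref{le:directsum} together with the direct-sum decomposition $\vu = \Ptm\vu + \Pn\vu$). For such tangential $\vu$ the extension property $F\vu = \widetilde{F}\vu$ from Definition \ref{def:Operator_e} turns the first row $F\vu = \vy$ into $\widetilde{F}\vu = \vy$. Hence the solution set of \eqref{eq:linear_e} inside $\hone{\surface_1}$ coincides exactly with the solution set of \eqref{eq:linear_tan} inside $\htm{\surface_1}$. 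Combined with the already established identity $\abs{\tv}_{\htm{\surface_1}}^2 = \abs{\tv}_{\hone{\surface_1}}^2$ for every $\tv \in \htm{\surface_1}$, this shows that the two constrained minimization problems share the same feasible set and the same objective on it, so $\tilde{\vu}_0$ minimizes one precisely when it minimizes the other, and both implications follow at once.

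I expect the only delicate point to be the bookkeeping in the first step of the second assertion: one must be sure that any solution of the ambient system is automatically tangential, so that no non-tangential competitor can enlarge the feasible set, and that the extension $F$ agrees with $\widetilde{F}$ exactly on that tangential set. Once this feasibility identification is secured, everything reduces to the pointwise seminorm equality for tangential fields, and no genuine analytic estimate is required; likewise the convexity and weak lower semicontinuity in the first assertion are routine given the boundedness of $L$.
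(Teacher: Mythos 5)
Your argument is correct. Note that the paper actually states Lemma \ref{le:properties} without any proof, so there is nothing to compare against; your write-up supplies exactly the details one would expect the authors to have in mind. For the first assertion, the factorization $\mathcal{R}(\vu)=\norm{L\vu}_{\leb{2}}^2$ with $L$ bounded (norm at most $1$ by \eqref{eq:hnorm}), convexity via composition with $t\mapsto t^2$, and weak sequential lower semicontinuity from convexity plus strong continuity is the standard route, and $\mathcal{D}(F)=\hone{\surface_1}$ is immediate from Definition \ref{def:Operator_e}. For the second assertion, your identification of the feasible set of \eqref{eq:linear_e} with that of \eqref{eq:linear_tan} via the constraint $\Pn\vu=0$, combined with the identity $\abs{\tv}_{\htm{\surface_1}}^2=\abs{\tv}_{\hone{\surface_1}}^2$ for tangential fields (which the paper records just before Definition \ref{def:Operator_e}), is precisely the intended mechanism; you correctly flag that the constraint equation is what prevents non-tangential competitors from entering.
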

To prove convergence of the regularization method we still need the following compactness results.

\begin{lem}
\label{le:compact}
Let the same assumptions as in Lemma \ref{le:properties} hold.
Then, for every $\alpha,\theta >0$, the sets 
\begin{equation*}
M_{\alpha,\vy}(\theta):=\set{\vu \in \hone{\surface_1} : \mathcal{T}_{\alpha,\vy}(\vu)\leq \theta}
\end{equation*}
are weakly sequentially compact in $\hone{\surface_1}$.
\end{lem}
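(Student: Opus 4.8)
The plan is to establish weak sequential compactness of $M_{\alpha,\vy}(\theta)$ by the standard route: since $\hone{\surface_1}$ is a Hilbert space (as proven above), hence reflexive, it suffices to show that (i) every level set $M_{\alpha,\vy}(\theta)$ is bounded in $\norm{\cdot}_{\hone{\surface_1}}$, and that (ii) the functional $\mathcal{T}_{\alpha,\vy}$ of \eqref{eq:Tik_ambint} is weakly sequentially lower semicontinuous, so that its level sets are weakly sequentially closed. Boundedness together with reflexivity yields that every sequence in $M_{\alpha,\vy}(\theta)$ has a weakly convergent subsequence, and weak closedness guarantees the weak limit again lies in $M_{\alpha,\vy}(\theta)$.

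The heart of the argument is step (i). Fix $\vu \in M_{\alpha,\vy}(\theta)$ and split it via Lemma~\ref{le:directsum} as $\vu = \tv + \vn$ with $\tv = \Ptm\vu \in \htm{\surface_1}$ and $\vn = \Pn\vu \in \hn{\surface_1}$. The three nonnegative summands in $\mathcal{T}_{\alpha,\vy}(\vu) \leq \theta$ give directly $\norm{F\vu - \vy}_{\lebtwo{2}} \leq \sqrt{\theta}$, $\norm{\vn}_{L^2(\surface_1)} \leq \sqrt{\theta}$, and $\abs{\vu}^2_{\hone{\surface_1}} \leq \theta/\alpha$. The orthogonality computation underlying Lemma~\ref{le:directsum} splits the seminorm as $\abs{\vu}^2_{\hone{\surface_1}} = \abs{\tv}^2_{\htm{\surface_1}} + \abs{\vn}^2_{\hn{\surface_1}}$, so that the normal part is already bounded, $\norm{\vn}^2_{\hn{\surface_1}} = \norm{\vn}^2_{L^2(\surface_1)} + \abs{\vn}^2_{\hn{\surface_1}} \leq \theta + \theta/\alpha$.

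For the tangential part I will transfer the bound to the Tikhonov functional $\widetilde{\mathcal{T}}_{\alpha,\vy}(\tv) = \norm{\widetilde{F}\tv - \vy}^2_{\lebtwo{2}} + \alpha\abs{\tv}^2_{\htm{\surface_1}}$ of the original tangential problem \eqref{eq:linear_tan}. Since $F$ extends $\widetilde{F}$, one has $\widetilde{F}\tv = F\tv = F\vu - F\vn$, so by the triangle inequality and boundedness of $F:\hone{\surface_1}\to\lebtwo{2}$, $\norm{\widetilde{F}\tv - \vy}_{\lebtwo{2}} \leq \norm{F\vu - \vy}_{\lebtwo{2}} + \norm{F}\,\norm{\vn}_{\hn{\surface_1}}$, which is bounded by the previous step, while $\alpha\abs{\tv}^2_{\htm{\surface_1}} \leq \alpha\abs{\vu}^2_{\hone{\surface_1}} \leq \theta$. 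Hence $\widetilde{\mathcal{T}}_{\alpha,\vy}(\tv) \leq \theta'$ for a constant $\theta'$ depending only on $\theta,\alpha,\norm{F}$ and $\norm{\vy}$, so $\tv$ lies in a level set of $\widetilde{\mathcal{T}}_{\alpha,\vy}$. By item~4 of Assumption~\ref{assu:ori} applied to \eqref{eq:linear_tan} these level sets are weakly sequentially compact in $\htm{\surface_1}$, and weakly sequentially compact subsets of a Hilbert space are bounded; thus $\norm{\tv}_{\htm{\surface_1}}$ is bounded uniformly over $M_{\alpha,\vy}(\theta)$. Combining with the Pythagorean identity $\norm{\vu}^2_{\hone{\surface_1}} = \norm{\tv}^2_{\htm{\surface_1}} + \norm{\vn}^2_{\hn{\surface_1}}$ from Lemma~\ref{le:directsum} proves (i).

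Finally, for (ii) I will verify that each summand of $\mathcal{T}_{\alpha,\vy}$ is weakly sequentially lower semicontinuous on $\hone{\surface_1}$: the operators $F$ and $\Pn$ are bounded and linear (continuity of $\Pn$ being Lemma~\ref{le:proj}, which invokes Assumption~\ref{ass:general}), hence weak-to-weak continuous, and composing with the weakly lower semicontinuous Hilbert norm preserves lower semicontinuity of the data and constraint terms; the regularizer $\abs{\cdot}^2_{\hone{\surface_1}}$ is a continuous squared seminorm, thus convex and continuous, and therefore weakly lower semicontinuous. Consequently a weak limit $\bar{\vu}$ of a subsequence $\vu_{k_j}\rightharpoonup\bar{\vu}$ satisfies $\mathcal{T}_{\alpha,\vy}(\bar{\vu}) \leq \liminf_j \mathcal{T}_{\alpha,\vy}(\vu_{k_j}) \leq \theta$, so $\bar{\vu} \in M_{\alpha,\vy}(\theta)$. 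I expect the genuine obstacle to be the bound on the tangential contribution to $\norm{\vu}_{\hone{\surface_1}}$: the remark following Lemma~\ref{le:normbound} warns that the seminorms $\abs{\cdot}_{\hone{\surface_1}}$ and $\abs{\cdot}_{\Hone{\surface_1}}$ are not equivalent, so no direct Poincar\'e-type estimate is available, and the way around this is precisely to borrow coercivity from the tangential problem through its own weak compactness assumption, as carried out in step (i).
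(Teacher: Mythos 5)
Your proof is correct and follows essentially the same route as the paper's: decompose $\vu=\tv+\vn$ via Lemma~\ref{le:directsum}, bound the normal component using the $\norm{\Pn\vu}_{L^2(\surface_1)}^2$ penalty together with the split of the seminorm, and reduce the tangential component to a level set of $\widetilde{\mathcal{T}}_{\alpha,\vy}$, whose weak sequential compactness is supplied by Assumption~\ref{assu:ori} applied to \eqref{eq:linear_tan}. The only difference is cosmetic: the paper packages the transfer to $\widetilde{\mathcal{T}}_{\alpha,\vy}$ via a Peter--Paul inequality with a specific choice of $\epsilon$, whereas you bound $\vn$ first and then use the triangle inequality, and your closing weak-lower-semicontinuity argument is the same as (and slightly more explicit than) the paper's.
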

\begin{proof}
For every $\vu\in\hone{\surface_1}$, by Definition \ref{def:spaces} and Lemma \ref{le:directsum}, we have 
\[\vu=\Ptm\vu+ \Pn \vu=\tv+\vn ,\]
where $\tv \in \htm{\surface_1}$ and $\vn\in \hn{\surface_1}$. By Definition \ref{def:Operator_e}, the operator $F$ fulfils $\norm{F\vu - \widetilde{F}\tv}_{\lebtwo{2}}^2=\norm{F{\vn}}_{\lebtwo{2}}^2$.
Now, we use the Peter--Paul inequality with $\epsilon>0$ and get
\begin{equation*}
 \begin{aligned}
  \norm{\widetilde{F}\tv -\vy}_{\lebtwo{2}}^2 &= \norm{\widetilde{F}\tv -F\vu  + F\vu -\vy}_{\lebtwo{2}}^2\\ 
  &\leq  (1+\epsilon)\norm{F{\vn}}_{\lebtwo{2}}^2+ (1+\frac{1}{\epsilon})\norm{F\vu -\vy}_{\lebtwo{2}}^2,
 \end{aligned}
\end{equation*}
and consequently it follows that
\begin{equation}\label{eq:compactestimate}
 \begin{aligned}
  & \mathcal{T}_{\alpha,\vy}(\vu)\\
    =  & \norm{F\vu-\vy}_{\lebtwo{2}}^2+\norm{\Pn\vu}_{L^2(\surface_1)}^2 + \alpha\abs{\vu}_{\hone{\surface_1}}^2\\
  \geq & \frac{\epsilon}{1+\epsilon} \norm{\widetilde{F}\tv -\vy}_{\lebtwo{2}}^2 -
          \epsilon \norm{F{\vn}}_{\lebtwo{2}}^2+\norm{\Pn\vu}_{L^2(\surface_1)}^2 +\alpha \abs{\vu}_{\hone{\surface_1}}^2.
 \end{aligned}
\end{equation}
Since $\abs{\vu}_{\hone{\surface_1}}^2=\abs{\tv}_{\htm{\surface_1}}^2+\abs{\vn}_{\hn{\surface_1}}^2$, and let
\[\widetilde{\mathcal{T}}_{\alpha,\vy}(\tv):=\norm{\widetilde{F}\tv -\vy}_{\lebtwo{2}}^2+\alpha \abs{\tv}_{\htm{\surface_1}}^2 \;,\]
it follows from \eqref{eq:compactestimate} that
\begin{equation*}
 \begin{aligned}
  \mathcal{T}_{\alpha,\vy}(\vu) 
  \geq &\frac{\epsilon}{1+\epsilon}\widetilde{\mathcal{T}}_{\alpha,\vy}(\tv) 
        + \frac{\alpha}{1+\epsilon} \abs{\tv}_{\htm{\surface_1}}^2\\
  & \quad + \norm{\vn}_{L^2(\surface_1)}^2 -\epsilon \norm{F{\vn}}_{\lebtwo{2}}^2+\alpha\abs{\vn}_{\hn{\surface_1}}^2.
  \end{aligned}
\end{equation*}
Choosing $\epsilon =\min \set{\frac{\alpha}{2\norm{F}^2},\frac{1}{2\norm{F}^2}}$, we get
\begin{equation*}
 \begin{aligned}
      & \norm{\vn}_{L^2(\surface_1)}^2 -\epsilon \norm{F{\vn}}_{\lebtwo{2}}^2+\alpha\abs{\vn}_{\hn{\surface_1}}^2\\
 \geq & 2\epsilon\norm{F}^2\norm{\vn}_{\hone{\surface_1}}^2-\epsilon\norm{F}^2\norm{\vn}_{\hone{\surface_1}}^2,
 \end{aligned}
\end{equation*}
and subsequently, we have
\begin{equation*}
 \begin{aligned}
 \mathcal{T}_{\alpha,\vy}(\vu) 
 \geq \frac{\epsilon}{1+\epsilon}\widetilde{\mathcal{T}}_{\alpha,\vy}(\tv) 
        + \frac{\alpha}{1+\epsilon} \abs{\tv}_{\htm{\surface_1}}^2+\epsilon\norm{F}^2\norm{\vn}_{\hone{\surface_1}}^2.
 \end{aligned}
\end{equation*}
This estimate shows that every level set of $\mathcal{T}_{\alpha,\vy}$ is uniformly bounded in $\hone{\surface_1}$ and thus 
has a weakly convergent subsequence in $\hone{\surface_1}$. Because of the weak-lower semi-continuity of the norms and seminorms 
and the boundedness of $F$ it follows that the limit is also an element of the level set, which gives the assertion.
\end{proof}

The conditions in Assumption \ref{assu:Surface} and \ref{ass:general} are satisfied by Corollary \ref{coro:Surface_vector}, 
except the estimate \eqref{eq:rhosigma} on operators.
\begin{lem}
\label{le:extended_operator}
Let \eqref{eq:rhosigma} hold for the operator $F$ in \eqref{eq:linear_e} for every $\sigma \geq 0$, 
and let \eqref{eq:Sobolev} and \eqref{eq:gamma} hold for $k_1=2$, $k_2=0$, define
\begin{equation*}
       \rho^a(\sigma) := \norm{\binom{T_{2,\sigma}^{-1}F_\sigma T_{2,\sigma}}{T_{1,\sigma}^{-1}\Pn{_\sigma}T_{1,\sigma}} - \binom{F}{\Pn}}.
\end{equation*}
Then we have the following estimates for the operator system $\binom{F}{\Pn}$ in \eqref{eq:linear_e},
\[\rho^a(\sigma)  \to 0 \text{ for } \sigma \to 0,\]
and in particular
\begin{equation*}
       \rho^a(\sigma) =\mathcal{O}(\rho(\sigma)+\gamma_1(\sigma)).
\end{equation*}
\end{lem}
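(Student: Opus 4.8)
The plan is to decouple the two rows of the block operator and estimate each contribution separately. For any $\vu\in\hone{\surface_1}$, the image under the difference operator lives in the product target space $\lebtwo{2}\times L^2(\surface_1)$, whose squared norm is the sum of the squared norms of the two components. Hence, with the abbreviation $B_\sigma:=T_{1,\sigma}^{-1}\Pn{_\sigma}T_{1,\sigma}-\Pn$,
\[
\rho^a(\sigma)\;\leq\;\sqrt{\norm{T_{2,\sigma}^{-1}F_\sigma T_{1,\sigma}-F}^2+\norm{B_\sigma}^2}\;\leq\;\rho(\sigma)+\norm{B_\sigma},
\]
where the first summand is exactly $\rho(\sigma)$ by the hypothesis that \eqref{eq:rhosigma} holds for the ambient operator $F$ of \eqref{eq:linear_e}. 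It therefore remains to show that $B_\sigma$, as an operator from $\hone{\surface_1}$ into $L^2(\surface_1)$, has norm of order $\gamma_1(\sigma)$.

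The key observation is that $B_\sigma$ is a pointwise multiplication operator. Tracing the pullback through the parametrizations, at a point $x=m_1(\zeta)\in\surface_1$ with companion point $x_\sigma=m_{1,\sigma}(\zeta)\in\surface_{1,\sigma}$, the operator $T_{1,\sigma}^{-1}\Pn{_\sigma}T_{1,\sigma}$ multiplies $\vu(x)$ by the rank-one projector $\normal_\sigma(x_\sigma)\normal_\sigma(x_\sigma)^T$, while $\Pn$ multiplies by $\normal(x)\normal(x)^T$; here $\normal_\sigma$ denotes the unit normal of $\surface_{1,\sigma}$. Thus $(B_\sigma\vu)(x)=\bigl(\normal_\sigma(x_\sigma)\normal_\sigma(x_\sigma)^T-\normal(x)\normal(x)^T\bigr)\vu(x)$, and since the spectral norm is dominated by the Frobenius norm, integrating over $\surface_1$ and using $\norm{\vu}_{L^2(\surface_1)}\leq\norm{\vu}_{\hone{\surface_1}}$ gives
\[
\norm{B_\sigma}\;\leq\;\sup_{\zeta\in\Omega_1}\abs{\normal_\sigma(x_\sigma)\normal_\sigma(x_\sigma)^T-\normal(x)\normal(x)^T}.
\]

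To bound this supremum I would write the difference of projectors as $\normal_\sigma(\normal_\sigma-\normal)^T+(\normal_\sigma-\normal)\normal^T$ and use $\abs{\normal}=\abs{\normal_\sigma}=1$ together with $\abs{ab^T}=\abs{a}\,\abs{b}$ for the Frobenius norm (as already exploited in Lemma \ref{le:proj}), obtaining the pointwise bound $2\,\abs{\normal_\sigma(x_\sigma)-\normal(x)}$. The final step is to control the uniform deviation of the normals by $\gamma_1(\sigma)$: the unit normal is a fixed, smooth (hence locally Lipschitz) function of the first-order derivatives $\partial m$ of the parametrization on the region where the tangent frame is nondegenerate, and the regularity and invertibility assumption \eqref{eq:Sobolev} (with $k_1=2$) keeps $\partial m_{1,\sigma}$ uniformly away from degeneracy; since $\gamma_1(\sigma)=\norm{m_{1,\sigma}-m_1}_{W^{2,\infty}(\Omega_1)}$ dominates $\norm{\partial m_{1,\sigma}-\partial m_1}_{L^\infty(\Omega_1)}$, this yields $\sup_\zeta\abs{\normal_\sigma(x_\sigma)-\normal(x)}\leq L\,\gamma_1(\sigma)$ for a constant $L$ depending only on the uniform bounds. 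Combining the three displays gives $\norm{B_\sigma}=\mathcal{O}(\gamma_1(\sigma))$ and hence $\rho^a(\sigma)\leq\rho(\sigma)+\mathcal{O}(\gamma_1(\sigma))=\mathcal{O}(\rho(\sigma)+\gamma_1(\sigma))$; as $\rho(\sigma)\to0$ by hypothesis and $\gamma_1(\sigma)\to0$ by Example \ref{exa:Sobolev}, also $\rho^a(\sigma)\to0$. I expect the only genuinely delicate point to be this last step — making the local-Lipschitz dependence of the unit normal on $\partial m$ precise and, above all, uniform in $\sigma$, with the Lipschitz constant controlled by the uniform nondegeneracy encoded in \eqref{eq:Sobolev}; the block-norm splitting and the rank-one projector identity are routine.
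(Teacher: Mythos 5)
The paper states Lemma \ref{le:extended_operator} without giving any proof, so there is no official argument to compare yours against; what you have written is a correct and complete way to fill that gap. Your three steps are all sound: the block splitting $\rho^a(\sigma)\leq\rho(\sigma)+\norm{B_\sigma}$ is valid for the product norm on $\lebtwo{2}\times L^2(\surface_1)$ (and you rightly read the first block as $T_{2,\sigma}^{-1}F_\sigma T_{1,\sigma}$, silently correcting an apparent typo in the statement); the identification of $B_\sigma$ as pointwise multiplication by the difference of the rank-one projectors $\normal_\sigma\normal_\sigma^T-\normal\normal^T$ follows directly from the definitions \eqref{eq:transform}--\eqref{eq:itransform} and \eqref{eq:pn}; and the bound $2\abs{\normal_\sigma(x_\sigma)-\normal(x)}$ via the splitting $\normal_\sigma(\normal_\sigma-\normal)^T+(\normal_\sigma-\normal)\normal^T$ is exactly the Frobenius-norm computation already used in Lemma \ref{le:proj}. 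The one point you correctly flag as delicate --- Lipschitz dependence of the unit normal on $\partial\parametrization$, uniformly in $\sigma$ --- does require that the nondegeneracy implicit in \eqref{eq:Sobolev} (equivalently, the uniform bound on $\parametrization_{1,\sigma}^{-1}$ underlying \eqref{eq:C}) hold with constants independent of $\sigma$, and that the normals be consistently oriented so that $\normal_\sigma\to\normal$ rather than $-\normal$; both are implicit in the paper's setup, and since the normal involves only first derivatives, control by $\gamma_1(\sigma)=\norm{\parametrization_{1,\sigma}-\parametrization_1}_{W^{2,\infty}(\Omega_1)}$ is more than enough.
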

Lemma \ref{le:properties} and \ref{le:compact} guarantee that Assumption \ref{assu:ori} is satisfied for the ambient operator equation \eqref{eq:linear_e}.
Assumptions \ref{assu:Surface} and \ref{ass:general} are verified as well because of Corollary \ref{coro:Surface_vector}, Lemma \ref{le:extended_operator}.
Then we can extend Theorem \ref{thm:directreg} and Theorem \ref{thm:Convergence} for \eqref{eq:linear_e}, the ambient operator equation for tangent vector fields, with exact and disturbed surfaces respectively.

\section{Examples}
In the following we present examples of applying Tikhonov regularization for an ill-posed problem and an image problem of which the solutions are vector fields defined on surfaces.

\label{sec:numerics}
\subsection{Magnetization reconstruction}
We consider a modelling for an inverse problem of reconstructing the Earth's magnetizations from measurement of the magnetic potential (see \cite{Ger16} for a recent reference), which consists in solving the operator equation
\begin{equation}
\label{eq:magnetization_3} 
 \vy = F \vu \text{ where } F \vu :=\frac{1}{4\pi} \int_{S_1^3} 
 \inner{\vu(x)}{\nabla_{y}\frac{1}{\abs{x-y}}}  ds(x).
\end{equation}
Here $\surface_1=S_1^3$ denotes the surface of the Earth, $\vu :S_1^3  \rightarrow \R^3$ denotes the vectorial magnetization 
of the Earth and $\vy: S_2^3\subset \R^3 \to \R$ denotes the magnetic potential 
data on the (satellite) orbit $\surface_2=S_2^3$. Moreover, $\inner{\cdot}{\cdot}$ denotes the Euclidean inner product in $\R^3$ 
and $\nabla_{y}$ denotes the gradient in Euclidean space with respect to $y$.  
We assume that the interior of the satellite orbit strictly contains $S_1^3$.

For the sake of simplicity of presentation we assume a $2D$-setting, that $\vu$, $\vy$ are 
constants in one Euler angle of $S_1^3$ and $S_2^3$ respectively. 
Then Equation \eqref{eq:magnetization_3} simplifies to 
\begin{equation}
\label{eq:magnetization} 
 \vy = F \vu \text{ where } F \vu :=\int_{S_1}  \inner{\vu(x)}{\nabla_{y} \log(\abs{y-x})} ds(x),
\end{equation}
where $S_1$ and $S_2$ denote the rectifiable, planar curves, which are the restrictions of $S_1^3$ and $S_2^3$ 
to the 2-dimensional Euclidean plane and $\inner{\cdot}{\cdot}$ denotes the Euclidean inner product in $\R^2$. 
For simplicity, we ignore a constant ($\frac{1}{2\pi}$) multiplication with the integral.
In the left image of Figure \ref{Fig:data}, the geometry of the experiment is sketched.
We plot simulated data $\vy$ according to some test data $\udag$, and some noisy data, $\vy^\delta$, which is 
obtained by adding Gaussian noise to $\vy$.
\begin{figure}[h]
\begin{center}
\includegraphics[width=0.52\textwidth]{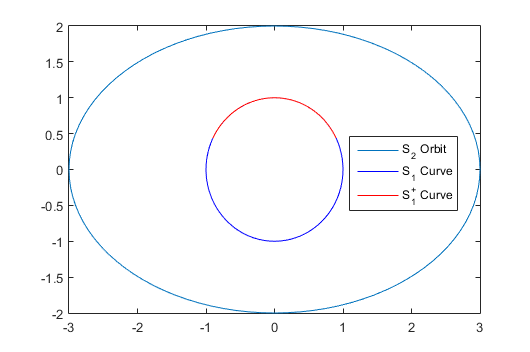}
\includegraphics[width=0.47\textwidth]{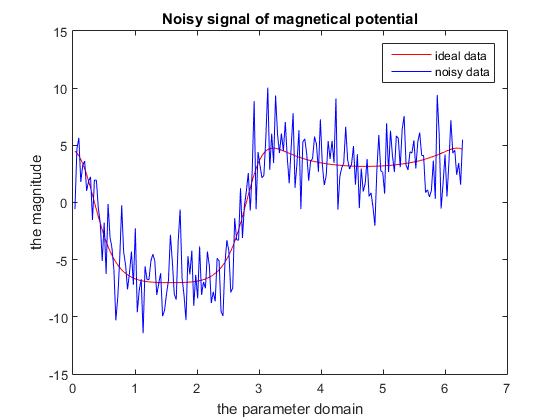}\\
\end{center}
\caption{The left image illustrates the problem setting. The right image shows some noisy magnetic potential data (with NSR=$0.5$) corresponding to the magnetization $\udag=[40x_1^3x_2, -40x_1^4]^T$.}
\label{Fig:data}
\end{figure}
As a case example, we assume that the lodestones are distributed only on a part of the upper half of the sphere $S_1^+$ 
of the earth. We denote functions in $\mathcal{H}^1(S_1)$, which have support on the upper hemi-sphere, by 
$\mathcal{H}^1(S_1^+)$.
This is numerically convenient, since this assumption allows to parametrize all functions with just one patch. 
We consider now $F$ as the operator
\[F:\mathcal{H}^1(S_1^+)\rightarrow L^2(S_2).\]
Let 
\begin{equation*}
\hat{C} := \sup_{y\in S_2} \set{\sup_{x\in S_1^+} \abs{\nabla_{y} (\log(\abs{y-x}))}},
\end{equation*}
which is finite, because $S_2$ and $S_1$ have a positive distance.
Then, because of 
\begin{equation*}
 \begin{aligned}
  \norm{F \vu}_{L^2(S_2)}^2 
  &= \int_{S_2} \left(\int_{S_1^+} \inner{\vu(x)}
                                         { \nabla_{y} (\log(\abs{y-x}))} ds(x) \right)^2\,ds(y)\\
  &\leq \hat{C}^2 \abs{S_2} \abs{S_1^+} \int_{S_1^+} \abs{\vu(x)}^2 \,ds(x)\\
 \end{aligned}
\end{equation*}
we have
\[\norm{F}\leq  \sqrt{\abs{S_2}\abs{S_1^+}} \hat{C}.\]

We point out that, in general, the 2-dimensional function $\vu$ in \eqref{eq:magnetization}
cannot be uniquely reconstructed from a 1-dimensional equation. 
If we restrict attention to tangential fields, that is $\vu^\dag$ is tangential to $S_1$, 
then the dimensions match, and one obtain a unique solution.
Here and in the later we will restrict to this case. 
Motivated by this, we consider regularization by the Tikhonov functional 
\begin{equation}
\label{eq:mag_tik}
\min_{\vu \in \hone{S_1^+}} \norm{F \vu -\vy^\delta }^2_{L^2(S_2)}+ \norm{\normal^T \vu}^2_{L^2(S_1)} + \alpha \abs{\vu}^2_{\mathcal{H}^1(S_1)}.
\end{equation}
On a one dimensional curve, using Definition \ref{def:spaces} the explicit form of the regularization functional becomes
\begin{equation*}
\abs{\vu}^2_{\mathcal{H}^1(S_1)}=\int_{S_1} (\partial_s\langle\vu, \normal \rangle) ^2+ (\partial_s \langle\vu,\tau \rangle)^2 ds(x),
\end{equation*}
where $\partial_s$ denotes the derivative along the curve $S_1$ with respect to arclength, and $\tau$ is then a unit tangent 
vector field of $S_1$.

\subsection*{Numerical tests}
In our numerical test example we assume that $S_1^+$ is the upper half part of a circle of radius $1$ and $S_2$ is an ellipse with short radius $2$ and long radius $3$.
Let $S_1^+$ be parametrized as the function graph $[x_1,x_2]^T=[t,\sqrt{1-t^2}]^T$, where $t\in [-0.9,0.9]$. We use the uniform 
cubic B-splines to approximate $S_1^+$ and note it as $S_1^+[h_1]$. $S_2$ is approximated piecewise linearly by polygons $S_2[h_2]$.
We first test an example of direct reconstruction without regularization, and the results (in Figure \ref{Fig:direct_inverse}) shows that the problem is highly ill-posed and the solution by a least square inversion completely losses the expected information.
\begin{figure}[h]
\begin{center}
\includegraphics[width=\textwidth]{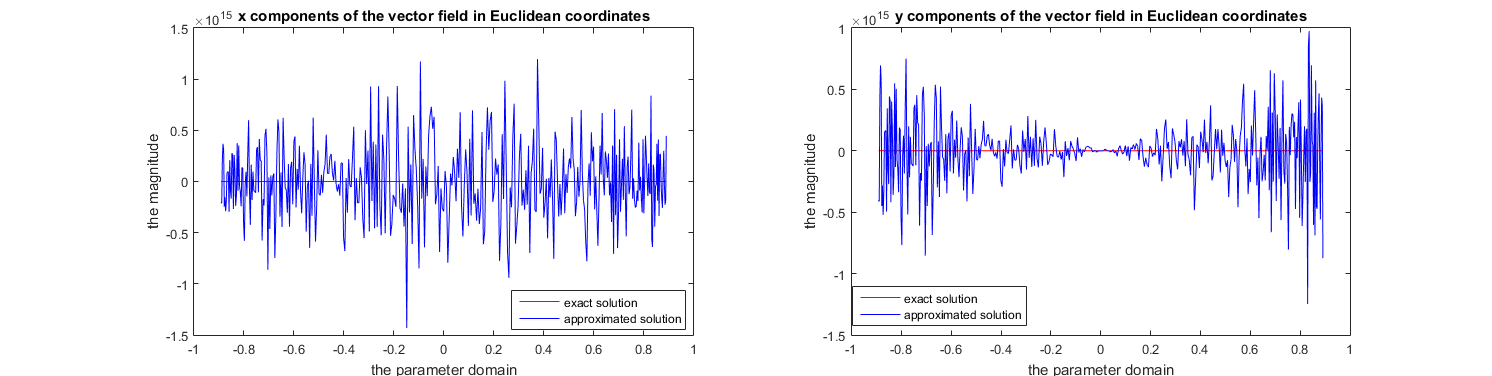}
\end{center}
\caption{A direct reconstruction without regularization.}
\label{Fig:direct_inverse}
\end{figure} 

Taking into account the discretization of the surfaces, we have the functional \eqref{eq:mag_tik} in an approximated form
\[ \norm{F_h \vu_h -\vy^\delta }^2_{L^2(S_2[h_2])}+\norm{\normal_h^T\vu_h}^2_{L^2(S_1[h_1])}  + 
\alpha \abs{\vu_h }^2_{\mathcal{H}^1(S_1[h_1])}. \]
The corresponding optimality condition is
\begin{equation}
\label{eq:optimal}
F_h^\star F_h\vu_h +\normal_h \normal_h^T\vu_h + \alpha\left(\tau_h \partial_{s_h}^2\langle\vu,\tau_h\rangle +\normal_h \partial_{s_h}^2 \langle\vu,\normal_h \rangle \right)=F_h^\star \vy^\delta,
\end{equation}
where $\tau_h$ and $\normal_h$ denotes the unit tangent and the unit normal vector field on $S_1^+[h_1]$ respectively, 
and $F_h^\star$ is the dual operator of $F_h$ in $L^2$ sense, that is 
\[F_h^\star\vy =\int_{S_2[h_2]} \vy(y) \frac{y-x}{\abs{y-x}^2} ds_h(y).\]

In the implementation, we use linear finite element methods for solving the problem \eqref{eq:optimal}. 
In the first example, we reconstruct the magnetization from the noisy data produced from a tangent vector field $\udag=[40x_1^3x_2, -40x_1^4]^T$ with $[x_1,x_2]^T\in S_1^+$ the coordinates in ambient space. The results are presented in Figure \ref{Fig:regularized_sol}, where we show a selection of plots by varying the noise level $\delta$ and the parameter $\alpha$, as well as the discretization scale $h$. The parameters are chosen to satisfy $\delta_k=C_1\alpha_k=C_2 h_k$ with $C_1$ and $C_2$ are constants, such that the assumption \eqref{eq:parameter} is fulfilled. 
The numerical results in Figure \ref{Fig:regularized_sol} are in accordance with the first statement of Theorem \ref{thm:Convergence}.
\begin{figure}
\begin{center}
\includegraphics[width=\textwidth]{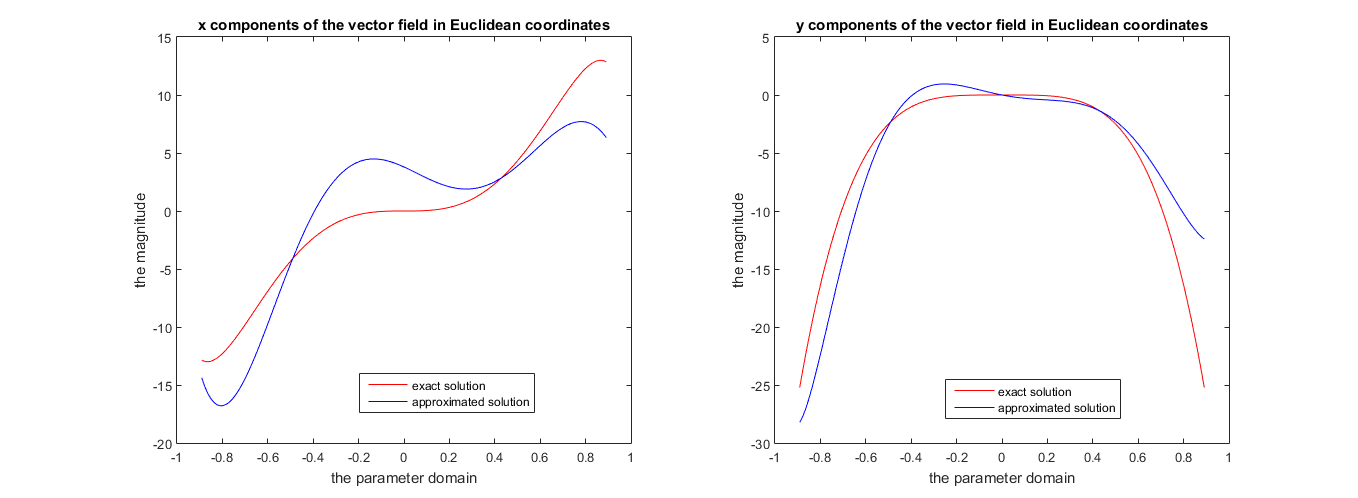}
\includegraphics[width=\textwidth]{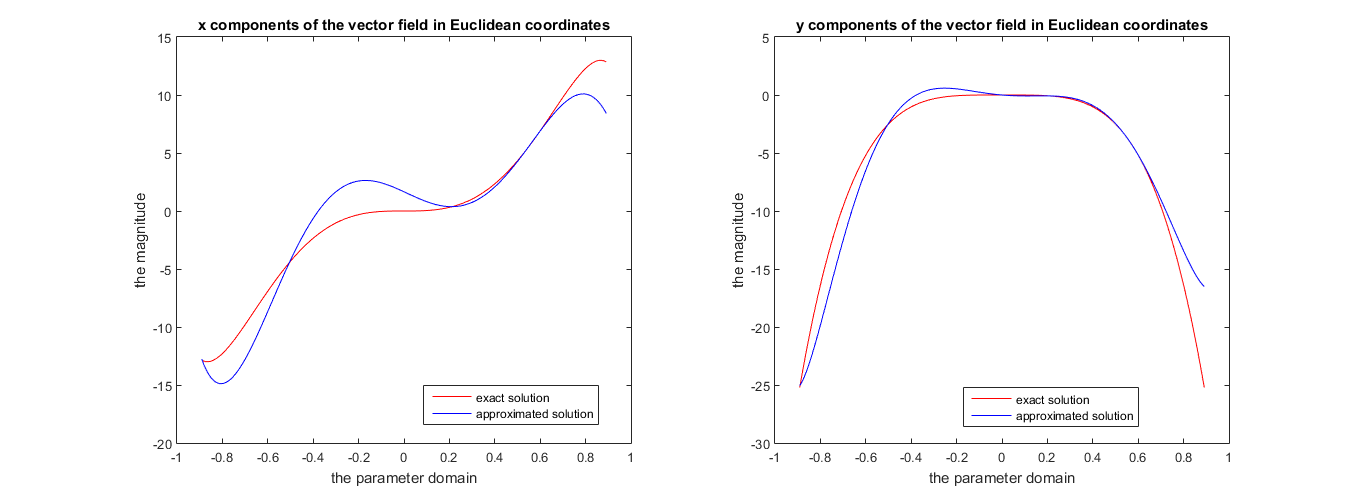}
\includegraphics[width=\textwidth]{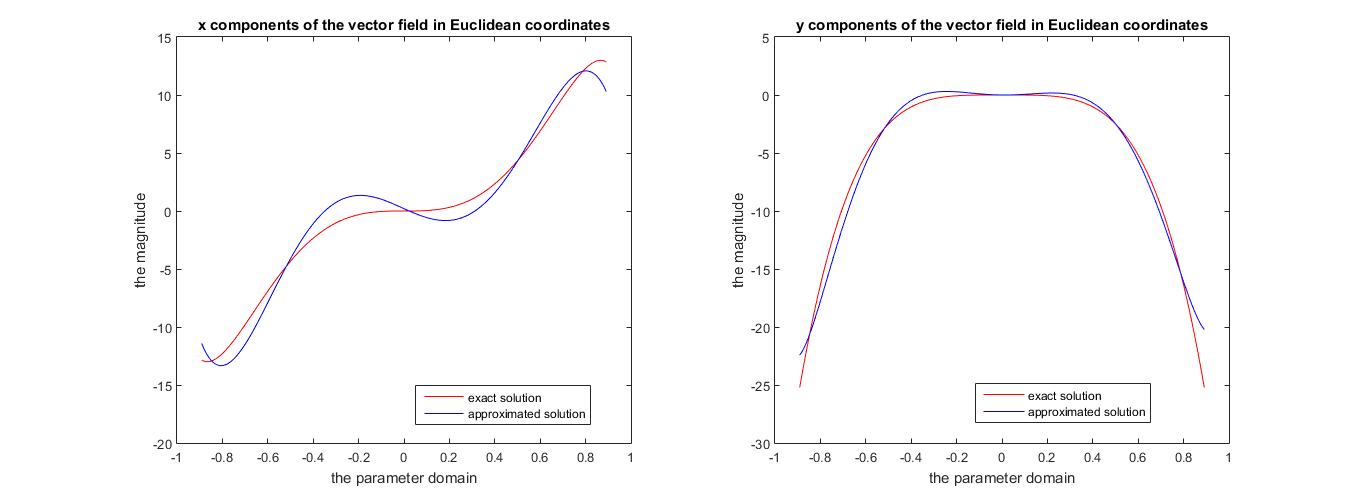}
\includegraphics[width=\textwidth]{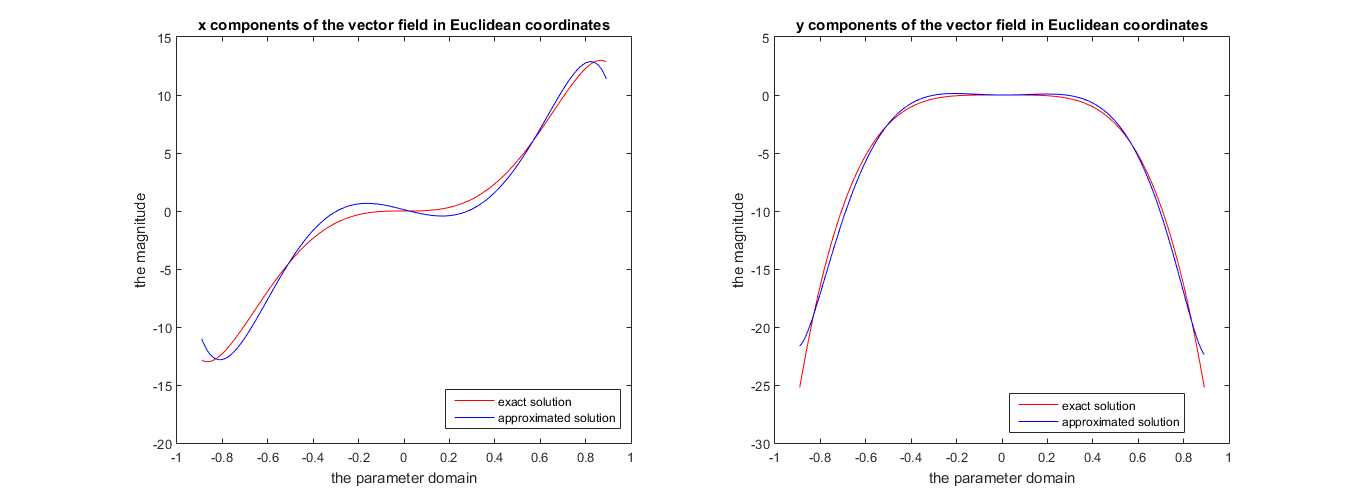}
\end{center}
\caption{The results obtained by minimizing the Tikhonov functional to approximate 
$\udag=[40x_1^3x_2, -40x_1^4]^T,\;[x_1,x_2]^T \in S_1^+$ with a decreasing level of noise, for decreasing regularization 
parameters and discretization sizes.}
\label{Fig:regularized_sol}
\end{figure} 

Another set of tests is made to distinguish the behaviour of the two semi-norms which are defined in \eqref{eq:h1} for regularization. In this example, we set the ideal solution to be $\udag(x)=[10x_2+5x_1 ,5x_2-10x_1]^T$ with $[x_1,x_2]^T \in S_1^+$, that is a vector field composed with constant amplitudes on both tangent and normal fields of $S_1^+$, and we let the noise signal ratio be $NSR=0.5$ in the data. Note that in this example, we do not enforce the tangential constraint, that is we are approximating the minimizer of
\[\norm{F \vu -\vy^\delta }^2_{L^2(S_2)}+ \alpha \abs{\vu}^2_{\hone{S_1}}.\]

The numerical results are shown in Figure \ref{Fig:constant}, where the results regularized by using the square of $\Hone{S_1}$ semi-norm \eqref{eq:sobolev} are also provided for comparison
\begin{equation}
\label{eq:sobolev}
\mathcal{R}(\vu)=\abs{\vu}^2_{\Hone{S_1}}.
\end{equation}
We visualize for the regularization parameter which gives the best approximation of the solution. 
We find that in this particular example, it is not possible to find a good reconstruction by minimizing with the 
functional \eqref{eq:sobolev}.
\begin{figure}[h]
\begin{center}
\includegraphics[width=\textwidth]{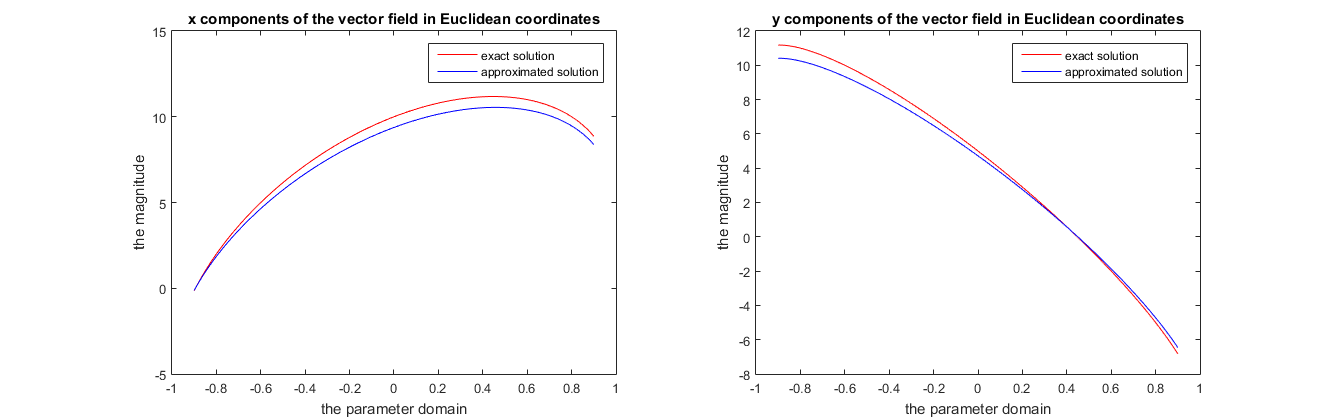}
\includegraphics[width=\textwidth]{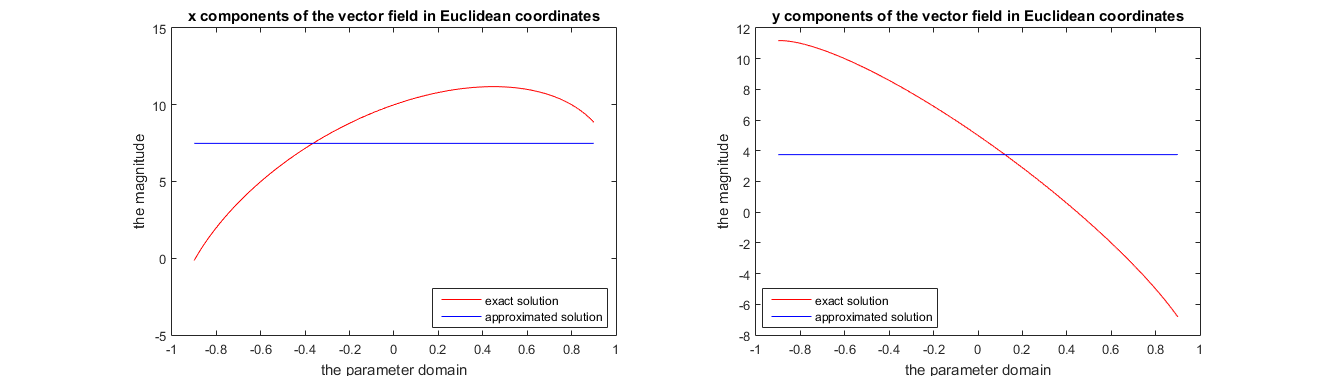}
\end{center}
\caption{The above two images plot the reconstruction with the squared $\hone{S_1}$-seminorm. The below ones are results with ordinary squared $\Hone{S_1}$-seminorm \eqref{eq:sobolev}. Here $\udag(x)=[10x_2+5x_1 ,5x_2-10x_1]^T=10\tau+5\normal.$}
\label{Fig:constant}
\end{figure} 

\subsection{Convergence rates for vector field denoising}
We consider denoising of a $2d$-vector-field $\vy^\delta = \vu^\delta$ defined on a $1$-dimensional curve $S$. In this example, we have the coincidence of the two surfaces, that is $\surface_1=\surface_2 =S$, and $F:\hone{S}\rightarrow L^2(S)$ is the embedding operator.

Assuming a sequence of approximating curves $(S_h)_{h \geq 0}$ of $S$ and data $(\vy_h^\delta)_{h \geq 0}$ defined on $S_h$, 
respectively,
Tikhonov regularization consists in minimizing the functional
\[\min_{\vu_h \in \mathcal{H}^1(S_h)} \norm{ \vu_h -\vu^\delta}^2_{L^2(S_h)}  + \alpha \abs{\vu_h }^2_{\mathcal{H}^1(S_h)}.\]

In our tests $S$ is the graph of a $sine$ function $[x_1,x_2]^T=[t,sin(t)]^T$ with $t\in [0,2\pi)$ 
(cf. Figure \ref{Fig:curves}). $S_h$ is approximated by uniform cubic B-splines.
In this way, $S_h$ is a $C^2$ approximation which satisfies Corollary \ref{coro:Surface_vector}.

We test for the synthetic solution 
\begin{equation*}
 \begin{aligned}
  \vu^\dag(x)
  =& 8(x_2) \tau+4\cos(x_1) \normal \\
  =& 4 \left[ \begin{array}{c} 
               \frac{2x_2\cos(x_1)-\cos(x_1)}{\sqrt{1+\cos^2(x_1)}}\\
               \frac{2x_2+\cos^2(x_1)}{\sqrt{1+\cos^2(x_1)}}
             \end{array} \right] \text{ for } x=[x_1,x_2]^T \in S.
 \end{aligned}
\end{equation*}

In this particular example the source condition \eqref{eq:sourcecondition} reads as follows:
\[\left(\tau\partial_{s}^2(8 x_2)) + \normal\partial_{s}^2(4\cos(x_1)) \right)  \in L^2(S),\]
which is easy to verify.

For the implementation, we approximate $\vu$ by piecewise linear functions with a uniform step size $h_{\vu}$. 
We denote the discretization size of $S_h$ by $h_{s}$. Then $S_{h}$ contributes a
surface disturbance to $S$ with an error bound of the order of $\gamma(h_s)$ according to our assumption. 
The results are shown in Table \ref{tab:Convergence_rates}, 
where we selectively show the residuals computed in the experiments.
We halve all the parameters for the selected steps, including the regularization parameter $\alpha_k$, the noise level $\delta_k$ and the discretization sizes $h_{s,k}$ and $h_{\vu,k}$. The parameters then satisfy $\frac{\delta_k^2}{\alpha_k}=\mathcal{O}(\delta_k)$, and the order of $\mathcal{O}(\gamma_k)$ is very close to the order of $\mathcal{O}(\alpha_k)$ in the tests, that is $\frac{\gamma_k^2}{\alpha_k}= \mathcal{O}(\gamma_k)$, $\gamma_k=\mathcal{O}(\delta_k)$.
We find that the numerical rates coincide with the results obtained in the second statement of Theorem \ref{thm:Convergence}
\[\abs{\check{\vu}_k-\vu^\dag}^2 = \mathcal{O}(\delta_k ).\]

\begin{figure}[h]
\begin{center}
\includegraphics[width=\textwidth]{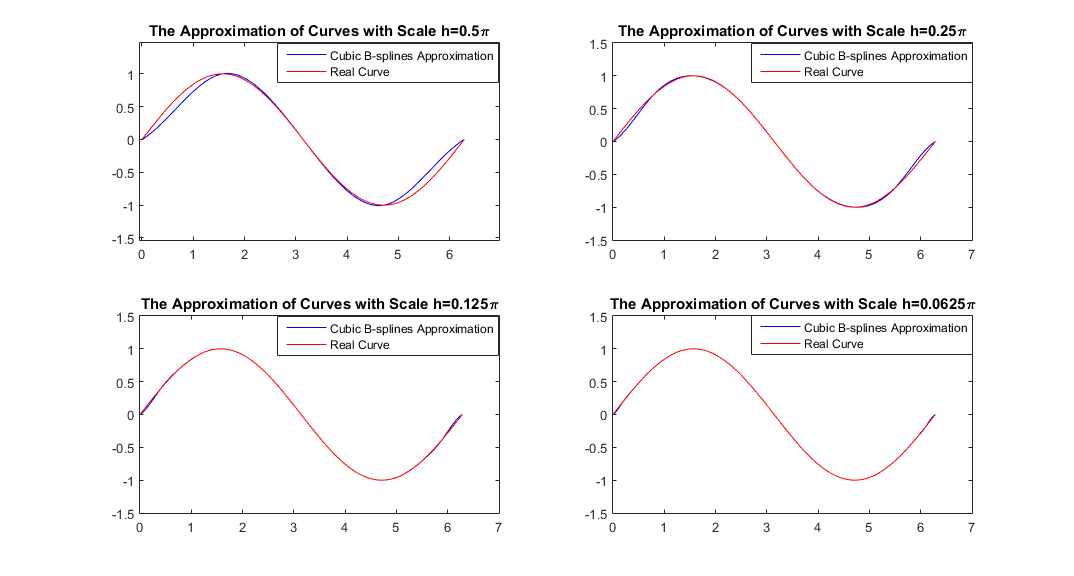}
\end{center}
\caption{Approximation of curves by different scales of discretization. }
\label{Fig:curves}
\end{figure} 

\begin{table}[h]
\setlength{\tabcolsep}{3pt}
\caption{Convergence rates of vector field denoising on curves}
\begin{center}
\begin{tabular}{ cccccc}
\hline
NSR($\frac{\norm{\delta}_{L^2}}{\norm{\vu^\dag}_{L^2}}$) & $1 $ & $0.5$  & $0.25$ & $0.125$ & $0.625$ \\
\hline
$\alpha$ &$ 0.04 $ & $ 0.02$  & $ 0.01$ &$ 0.005$ & $ 0.0025$   \\
\hline
&   $h_{s,1}=0.5\pi $  & $\gamma_1=1.8371$  & $h_{\vu,1}=0.02\pi $& \\
\hline
$\abs{\check{\vu}_1-\vu^\dag}^2$ & $\mathbf{366.3082}$  & $228.1245$ &$133.0704$  &$77.8783$ &$48.6980$ \\
\hline
&   $h_{s,2}=0.25\pi $  & $\gamma_2=0.8211$  & $h_{\vu,2}=0.01\pi $& \\
\hline
$\abs{\check{\vu}_2-\vu^\dag}^2$ & $347.8737$  & $\mathbf{200.5511}$ &$110.7511$  &$62.4273$ &$37.5464$ \\
\hline
&   $h_{s,3}=0.125\pi $  & $\gamma_3=0.3866 $   &$h_{\vu,3}=0.005\pi $& \\
\hline
$\abs{\check{\vu}_3-\vu^\dag}^2$& $276.7971$  & $166.7043$ &$\mathbf{94.3003}$  &$54.0387$ &$33.1077$ \\
\hline
&   $h_{s,4}=0.0625\pi $  & $\gamma_4=0.1922 $   & $h_{\vu,4}=0.0025\pi $&\\
\hline
$\abs{\check{\vu}_4-\vu^\dag}^2$ & $242.3850$  & $150.7489$ &$90.4440$  &$\mathbf{55.0508}$ &$34.8112$ \\
\hline
&   $h_{s,5}=0.03125\pi $  & $\gamma_5=0.0971 $   &$h_{\vu,5}=0.00125\pi $ &\\
\hline
$\abs{\check{\vu}_5-\vu^\dag}^2$ & $268.2314$  & $158.8666$ &$90.0663$  &$52.4830$ &$\mathbf{32.7122}$ \\
\hline
\end{tabular}
\end{center}
\label{tab:Convergence_rates}
\end{table}

\section{Conclusion}
In this paper we have studied Tikhonov regularization for solving ill--posed operator equations where the 
solutions are functions defined on surfaces, 
and especially we emphasize on the functions with range in vector bundles.
Such problems appear in a variety of applications
such as recovering magnetization from magnetic potential, vector fields denoising on surfaces and so on.
We extended the existing theory on approximation of infinite dimensional Tikhonov regularized solutions 
for ill--posed operator equations to the surface setting.
The theory has been generalized to the case of vector fields,
where they are represented by vector valued functions associated to the coordinates in ambient spaces of the surfaces. 
The additional features of this theory are that it allows to take into account perturbations of the surface and the vector bundle,
and it provides an analysis on the convergence and convergence rates which may give an optimal regularization parameter choice. 

\section*{Acknowledgement}
This work has been supported by the Austrian Science Fund (FWF)
within the national research network Geometry and Simulation, projects S11704, S11707. 
Guozhi Dong thanks Christian Gerhards for leading him to the application on magnetization problems 
and the productive discussions. All the authors thank the reviewers and the editor for their comments 
to help improving the paper.

\appendix
\section{Background on surfaces and vector fields}
\label{appendix:background}
Throughout this paper we use some geometrical notations which are collected in Table \ref{tab:Geo_Notation}. 
\begin{table}[h]
\setlength{\tabcolsep}{3pt}
\caption{Notation corresponding to the geometry}
\begin{center}
\begin{tabular}{ cccccc}
\hline
Notation & Remark  & Notation & Remark   \\
\hline
$\surface$ & a parametrizable surface & $g$ & metric tensor   \\
\hline
$\parametrization$ & mapping $\parametrization:\Omega\rightarrow \surface$ &$\partial \parametrization$ & Jacobian of $\parametrization$ \\
\hline
$\mathcal{TM}$ & tangent vector bundle& $\mathcal{NM}$ & normal vector bundle\\
\hline
$\Ptm$ & tangent projection & $\Pn$ & normal projection \\
\hline
$\normal$ & unit normal vector & $g(\cdot,\cdot)$& inner product on $\mathcal{TM}$  \\
\hline
$\partial_i \parametrization$ & tangent basis vector &   \\
\hline
\end{tabular}
\end{center}
\label{tab:Geo_Notation}
\end{table}

Let $(\surface,g)$ be a $d$-dimensional smooth, oriented, connected and compact surface in $\R^{d+1}$, which has bounded curvature, and 
is associated with the Riemannian metric $g$. To simplify the representation we assume that the surface $\surface$ 
is parametrizable, that is 
       $\surface \subseteq \R^{d+1}$ can be represented by a map $\parametrization: \Omega \to \R^{d+1}$,
       \begin{equation*}
          \surface = \parametrization (\Omega),
       \end{equation*}
where $\Omega\subset \R^d$ and $\parametrization:\Omega\rightarrow \surface$ is a regular parametrization. In particular we assume that $\parametrization$ is bijective and the inverse $\parametrization^{-1}:\surface\rightarrow \Omega$ is also a regular map.
$\mathcal{T}_x\surface$, $\mathcal{N}_x\surface$ denote the tangent and normal vector spaces at $x \in \surface$,  respectively.
Then the tangent and normal bundle are defined by 
       \begin{equation*}
        \mathcal{TM} = \bigcup_{x \in \surface,\;} \set{(x,\vv): \vv \in \mathcal{T}_x\surface}
       \end{equation*}
       and 
       \begin{equation*}
        \mathcal{NM} = \bigcup_{x \in \surface,\;} \set{(x,\vv): \vv \in \mathcal{N}_x\surface}\,,
       \end{equation*}
respectively.
We denote by 
       \begin{equation*}
       \partial \parametrization (\zeta) = [\partial_1 \parametrization,\partial_2 \parametrization,\cdots,\partial_d \parametrization](\zeta) \in \R^{(d+1) \times d}
       \end{equation*}
the \emph{Jacobian} of the parametrization at $x = \parametrization(\zeta) \in \surface$. 
The parameters in $\Omega$ are denoted by $\zeta$. The derivatives with respect to these parameters are always denoted by $\partial$.
The \emph{metric tensor} $g$ is related to the parametrization $\parametrization$ by 
       \begin{equation}
        \label{eq:metric}
        g(\zeta) =(\partial \parametrization(\zeta))^T \partial \parametrization(\zeta) \in \R^{d \times d}
       \end{equation}
       on $\surface$ at $x=\parametrization(\zeta)$. 
We call a vector field a \emph{tangent vector field} if $\tilde{\vv}:\surface\rightarrow \R^{d+1}$ with range in the tangent bundle $\mathcal{TM}$, that is, $\tilde{\vv}(x) \in \mathcal{T}_x\surface$ for all $x \in \surface$.
       Every tangent vector $\tilde{\vv}(x) \in \mathcal{T}_x \surface$ can be represented in terms of the tangential basis 
       $(\partial_i \parametrization(\zeta))_{i=1,2,\cdots,d}$, at each $x = \parametrization(\zeta)$, via 
       \begin{equation}
       \label{eq:para}
        \tilde{\vv}(x) = \partial \parametrization (\zeta) (\hat{v}_1(\zeta),\cdots,\hat{v}_{d}(\zeta))^T \in \R^{(d+1) \times 1}.
       \end{equation}
 For two tangent vector field $\tilde{\vv}(x)$ and $\tilde{\vu}(x)$, using \eqref{eq:para}, we have the relation
        \begin{eqnarray*}
        g(\tilde{\vv},\tilde{\vu})&:=&(\hat{v}_1(\zeta),\cdots,\hat{v}_{d}(\zeta))\underbrace{\left(\partial \parametrization (\zeta)\right)^T\partial \parametrization (\zeta)}_{=g(\zeta)}(\hat{u}_1(\zeta),\cdots,\hat{u}_{d}(\zeta))^T \nonumber\\
        &=&\tilde{\vv}^T \tilde{\vu}.        
        \end{eqnarray*}       
We denote by $\normal(x)= (n_1(x),n_2(x),\cdots,n_{d+1}(x))^T\in \R^{d+1}$ the unit normal vector of 
      $\surface$ at $x\in \surface$ with fixed orientation, and by $\Ptm$ and $\Pn$ the projections onto 
      $\mathcal{T}\surface$, $\mathcal{N}\surface$ at a point $x \in \surface$, respectively. They are represented by matrices 
      \begin{equation}
      \label{eq:pn}
         \Pn(x) = \normal(x) \normal(x)^T \in \R^{(d+1) \times (d+1)} \quad \text{and} \quad  \Ptm(x) = \mathcal{I} - \Pn(x) \in \R^{(d+1) \times (d+1)},
      \end{equation} 
where $\mathcal{I}$ is the $(d+1) \times (d+1)$ identity matrix. Because $\Ptm$ and $\Pn$ are orthogonal projections on the surface they are satisfying the pointwise estimate
\begin{equation*}
\max \set{\abs{\Ptm \vu},\abs{\Pn \vu}} \leq \abs{\vu}\;.
\end{equation*}
In Figure \ref{fig:vectorfields} we give a schematic representation of the surface $\surface$, its parametrization $\parametrization$ over the coordinate domain $\Omega$, as well as the vector spaces $\mathcal{T}_x\surface$ and $\mathcal{N}_x\surface$. 
\begin{figure}[ht]
\centering
\begin{picture}(170,70)
\put(65,25){$\parametrization$}
\put(130,25){$\surface$}
\put(135,70){$\mathcal{N}_x\surface$}
\put(170,45){$\mathcal{T}_x\surface$}
\put(130,36){$x$}
\put(30,15){$\zeta$}
\put(20,5){$\Omega$}
\includegraphics[width=0.5\textwidth]{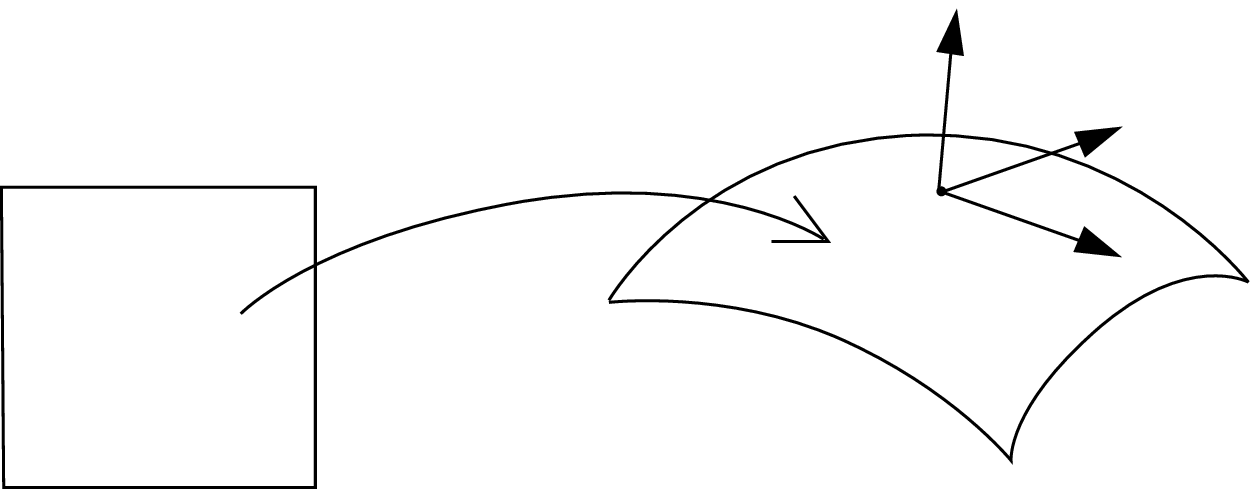}
\end{picture}
\caption{Geometry mapping and vector spaces}
\label{fig:vectorfields}
\end{figure}
 
\begin{lem} 
Let $A \in \R^{(d+1) \times (d+1)}$, then for every $x \in \surface$ 
\begin{equation}
\label{eq:AA}
 \abs{\Ptm(x) A} \leq \abs{A}\,,\text{ and } \abs{\Pn(x) A} \leq \abs{A}
\end{equation}
where $\abs{A}$ denotes the Frobenius norm.
\end{lem}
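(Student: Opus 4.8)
The plan is to fix $x \in \surface$ (henceforth suppressed from the notation) and exploit that $\Ptm$ and $\Pn$ are \emph{complementary orthogonal projections}, which will yield a Pythagorean identity for the Frobenius norm that delivers both inequalities at once. First I would record the purely algebraic properties of the two matrices that follow from their representation \eqref{eq:pn}. Since $\normal$ is a unit vector, $\normal^T\normal = 1$, and short computations show that $\Pn = \normal\normal^T$ and $\Ptm = \mathcal{I} - \normal\normal^T$ are both symmetric and idempotent, that they sum to the identity, $\Ptm + \Pn = \mathcal{I}$, and that their products vanish, $\Ptm\Pn = \Pn\Ptm = 0$ (for instance $(\mathcal{I} - \normal\normal^T)\normal\normal^T = \normal\normal^T - \normal(\normal^T\normal)\normal^T = 0$).

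Next I would use the trace representation of the Frobenius norm, $\abs{A}^2 = \trace(A^T A)$, together with the decomposition $A = \Ptm A + \Pn A$. Expanding
\begin{equation*}
\abs{A}^2 = \trace\left( (\Ptm A + \Pn A)^T(\Ptm A + \Pn A) \right),
\end{equation*}
symmetry and idempotency turn the two diagonal contributions into $\trace(A^T\Ptm A) = \abs{\Ptm A}^2$ and $\trace(A^T\Pn A) = \abs{\Pn A}^2$, while each cross term carries a factor $\Ptm\Pn$ or $\Pn\Ptm$ and therefore vanishes. This produces the identity
\begin{equation*}
\abs{A}^2 = \abs{\Ptm A}^2 + \abs{\Pn A}^2.
\end{equation*}
Because both summands on the right are non-negative, each is bounded by $\abs{A}^2$, and the two asserted inequalities follow upon taking square roots.

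This argument is essentially elementary, so I do not anticipate a serious obstacle; the only point requiring care is the vanishing of the cross terms, which rests on the genuine matrix-level orthogonality $\Ptm\Pn = 0$ rather than merely the orthogonality of the tangent and normal subspaces as ranges. An alternative route would bypass the identity entirely and instead combine the submultiplicative bound $\abs{BA} \leq \norm{B}_2\abs{A}$ (proved column by column) with the fact that an orthogonal projection has spectral norm at most one, giving both inequalities as well; I would nevertheless favor the Pythagorean identity above, since it is sharper and makes the decomposition $\abs{A}^2 = \abs{\Ptm A}^2 + \abs{\Pn A}^2$ explicit, which is exactly the structure invoked in the norm-equivalence arguments (cf. Lemma \ref{le:directsum} and Lemma \ref{le:normbound}).
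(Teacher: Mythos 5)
Your proposal is correct and follows essentially the same route as the paper: both establish the Pythagorean identity $\abs{A}^2 = \abs{\Ptm(x) A}^2 + \abs{\Pn(x) A}^2$ by expanding $\trace\left((\Ptm A + \Pn A)^T(\Ptm A + \Pn A)\right)$ and using that the cross terms vanish for the complementary orthogonal projections. Your write-up is in fact slightly more explicit than the paper's about why the cross terms vanish (the matrix identity $\Ptm\Pn = \Pn\Ptm = 0$), but the argument is the same.
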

\begin{proof}
For every matrix $A$, $\abs{A}^2=\trace (A^T A)$, and because $\Ptm (x)$ and $\Pn (x)$ are orthogonal projections, 
we have
\[
\begin{aligned}
 \abs{A}^2 &= \abs{\Ptm(x) A +\Pn(x) A}^2 \\
 &= \trace \left((\Ptm(x) A +\Pn(x) A)^T(\Ptm(x) A +\Pn(x) A)\right)\\
 &= \trace \left((\Ptm(x) A)^T \Ptm(x) A + (\Pn(x) A)^T(\Pn(x) A)\right)\\ 
 &= \trace \left((\Ptm(x) A)^T \Ptm(x) A\right) + \trace \left((\Pn(x) A)^T(\Pn(x) A)\right)\\
 &= \abs{\Ptm(x) A}^2 +\abs{\Pn(x) A}^2.
\end{aligned}
\]
\end{proof}

Moreover, we have the relation between the parametrization and the projection
      \begin{equation*}
       \Ptm(x) = \partial m(\zeta)g^{-1}(\zeta) (\partial m(\zeta))^T 
      \end{equation*}
      for $x = m(\zeta)$. We denote by 
      \begin{equation}\label{eq:projectiona}
        (\partial \parametrization (\zeta))^\dagger = g^{-1}(\zeta) (\partial \parametrization)^T(\zeta) \in \R^{d \times (d+1)},
      \end{equation}
      the Moore-Penrose inverse of the matrix $\partial \parametrization (\zeta) \in \R^{(d+1) \times d}$, which satisfies
      \begin{equation*}
       (\partial \parametrization (\zeta))^\dagger \partial \parametrization (\zeta) = \id(\zeta) \in \R^{d \times d} \text{ and }
       \partial \parametrization (\zeta) (\partial \parametrization (\zeta))^\dagger = \Ptm (m(\zeta)) \in \R^{(d+1) \times (d+1)}.
      \end{equation*}
      
      \begin{defi}
      \label{de:gradient}
       For a scalar field $v : \surface \to \R$, we define its \emph{surface gradient} at a point $x=\parametrization(\zeta)$ by
       \begin{equation} \label{eq:nabla_m}
          \gradientM v(x):= \left(\partial (v \circ \parametrization)\; (\partial \parametrization)^\dagger \right)\circ m^{-1}(x)
          \in \R^{1 \times (d+1)}.
       \end{equation}
       The surface gradient fulfills the relation 
       \begin{equation}\label{eq:relations_gradients}
       \gradientM v(\parametrization(\zeta))\; \partial \parametrization(\zeta) = 
       \partial (v \circ \parametrization)(\zeta) \in \R^{1 \times d}.
       \end{equation}
       Given a vector valued function $\vv:\surface\rightarrow \R^{d+1}$, the definition of the 
       \emph{gradient with respect to the surface}  
       $\gradientM$ consists in taking the gradient of each scalar component $v_i$, i.e., for $x = \parametrization(\zeta)$ we have
       \begin{equation}
       \label{eq:vnabla_m}
       \gradientM \vv(x):=
        \left(\begin{array}{c}
	\gradientM v_1(x) \\
	\cdots \\
	\gradientM v_{d+1}(x) 
        \end{array}\right)=
        \left(\partial (\vv \circ \parametrization)(\zeta) (\partial \parametrization (\zeta))^\dagger\right)\circ m^{-1}(x)
        \in \R^{(d+1) \times (d+1)}.
       \end{equation}
       \end{defi}
       Every vector field $\vv:\surface\rightarrow \R^{d+1}$ can be decomposed into the two vector fields 
       $\Ptm \vv$ and $\Pn \vv$ which have ranges in 
       $\mathcal{T}_x \surface$ and $\mathcal{N}_x\surface$ respectively for all $x \in \surface$.   
       
       In the following we recall the definition of the \emph{covariant derivative} of a \emph{tangent vector field} 
       $\tilde{\vv} : \surface \to \TB$ (see \cite{Lee97,Car92}). 
       Equation \eqref{eq:para} states that 
       \begin{equation*}
       \tilde{\vv}(\parametrization(\zeta)) = \partial m(\zeta) (\hat{v}_1(\zeta),\cdots,\hat{v}_d(\zeta))^T = 
       \sum_{k=1}^d \hat{v}_k(\zeta) \partial_k \parametrization(\zeta),
       \end{equation*}
        where $\hat{v}_k : \Omega \to \R$ denotes the local coordinate representation with respect to the basis 
       $(\partial_k \parametrization)_{k=1,2,\cdots,d}$.
       
       \begin{defi}[refering to \cite{Car92,DubFomNov91}]\label{de:covariant}
       The \emph{covariant derivative} in direction $k$ with respect to tangent basis $\partial m$, 
       $(\widetilde{\nabla}_k \tilde{\vv})_{k=1,2,\cdots,d}$, is defined by the orthogonal projection of 
       \begin{equation*}
        \partial_k (\tilde{\vv} \circ \parametrization)(\zeta)
        =  \sum_{i=1}^d \partial_k \hat{v}_i(\zeta) \partial_i \parametrization(\zeta) + 
                                      \sum_{i=1}^d \hat{v}_i(\zeta) \partial_{ik} \parametrization(\zeta)           
       \end{equation*}
into the tangent space. That is, for $x=m(\zeta)$, the \emph{covariant derivative} is given by  
       \begin{equation}\label{eq:covariant}
          \widetilde{\nabla} \tilde{\vv} (\parametrization(\zeta)) = \Ptm(m(\zeta))
          [ \partial_1 (\tilde{\vv} \circ \parametrization)(\zeta),
          \cdots,
            \partial_d (\tilde{\vv} \circ \parametrization)(\zeta)].
       \end{equation}
       \end{defi}
In particular for a tangent vector field $\tilde{\vv}$ at $x = m(\zeta)$ it follows from \eqref{eq:covariant} and 
       \eqref{eq:vnabla_m} that
       \begin{equation}
       \label{eq:cov_m_t}
         \widetilde{\nabla} \tilde{\vv} (x) (\partial \parametrization (\zeta))^\dagger = \Ptm(x) \gradientM \tilde{\vv} (m(\zeta)).
       \end{equation}
Moreover, for an arbitrary vector field $\vv :\surface \to\R^{d+1}$ we have
       \begin{equation*}
        \widetilde{\nabla} (\Ptm \vv) (x) (\partial \parametrization (\zeta))^\dagger = \Ptm(x) \gradientM (\Ptm \vv) (x).
       \end{equation*}

To conclude the appendix, we present a few auxiliary results, which are used to characterize spaces in Section \ref{sec:vectorfields}.
 \begin{lem}\label{le:invariant}
The gradient operator $\gradientM$ is independent of the parameterization,  and fulfils standard rules of differentiation, such as the product rule
\begin{enumerate}
 \item Let $v,w:\surface \to \R$ be differentiable, then 
       \begin{equation*}\gradientM (vw)=v\gradientM w+ w\gradientM v.\end{equation*}
 \item Moreover, let $\vv,\vw,\vz: \surface \to\R^{d+1}$, then  
       \begin{equation*}
        \begin{aligned}
          \gradientM (\vv^T \vw) &= \vv^T\gradientM \vw+ \vw^T\gradientM \vv,\\
          \gradientM (v \vw) &= \vw\gradientM v+v \gradientM \vw,\\
          \gradientM ((\vv^T \vw)\vz) &= \vz \vv^T\gradientM \vw+ \vz \vw^T\gradientM \vv +(\vv^T \vw) \gradientM \vz.
        \end{aligned}
       \end{equation*}
\end{enumerate}
 \end{lem}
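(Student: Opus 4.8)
The plan is to handle the two assertions separately, deriving both directly from Definition~\ref{de:gradient}, in particular from the formula \eqref{eq:nabla_m} and the characterizing identity \eqref{eq:relations_gradients}.

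First I would establish the parametrization independence. The key observation is that $\gradientM v$ is always tangential: by \eqref{eq:nabla_m} together with \eqref{eq:projectiona}, the row vector $\gradientM v(x)$ lies in the span of $(\partial\parametrization)^T$, that is, in $\mathcal{T}_x\surface$. Feeding this into \eqref{eq:relations_gradients} yields the intrinsic characterization $\gradientM v(x)\,w=\partial_w v$ for every tangent vector $w=\partial\parametrization(\zeta)\,a$, where the right-hand side is simply the derivative of $v$ along the corresponding surface curve. Since this directional derivative depends only on $x$ and $w$, and since the $\partial_i\parametrization$ span $\mathcal{T}_x\surface$, these two facts (tangentiality together with the prescribed pairing against all tangent vectors) pin down $\gradientM v(x)$ uniquely: if two parametrizations produced values $\gradientM v(x)$ and $\gradientM' v(x)$, their difference would be tangential and would annihilate every tangent vector, hence it would vanish by nondegeneracy of the Euclidean inner product restricted to $\mathcal{T}_x\surface$.

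An equivalent, more computational route that I would at least sketch takes two parametrizations related by a diffeomorphism $\parametrization_1=\parametrization_2\circ\phi$: the chain rule gives $\partial(v\circ\parametrization_1)=\partial(v\circ\parametrization_2)\,\partial\phi$ and $\partial\parametrization_1=\partial\parametrization_2\,\partial\phi$, while \eqref{eq:metric} gives $g_1=(\partial\phi)^T g_2\,\partial\phi$. Combining these shows $(\partial\parametrization_1)^\dagger=(\partial\phi)^{-1}(\partial\parametrization_2)^\dagger$, so that the factors $\partial\phi$ and $(\partial\phi)^{-1}$ cancel in \eqref{eq:nabla_m}. This transformation law for the Moore--Penrose inverse is the one step requiring genuine care, since $\partial\parametrization$ is rectangular; I expect it to be the main (indeed the only real) obstacle of the lemma.

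For the product rules, everything collapses to the ordinary product rule on $\Omega$ after composing with $\parametrization$. For scalars I would write $\gradientM(vw)=\partial\big((v\circ\parametrization)(w\circ\parametrization)\big)(\partial\parametrization)^\dagger$, expand the Jacobian by the product rule, and split into the two terms to obtain $v\gradientM w+w\gradientM v$. The vector identities then follow by applying this scalar rule entrywise: from $\vv^T\vw=\sum_i v_i w_i$ one reads off the first identity using $\vv^T\gradientM\vw=\sum_i v_i\gradientM w_i$; the $i$-th row of $\gradientM(v\vw)$ equals $\gradientM(vw_i)$, which gives the second (with $\vw\gradientM v$ understood as the outer product of the column $\vw$ with the row $\gradientM v$); and setting $s:=\vv^T\vw$ and invoking the second rule in the form $\gradientM(s\vz)=\vz\gradientM s+s\gradientM\vz$ gives the third. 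These last steps are pure bookkeeping of row/column conventions and present no real difficulty.
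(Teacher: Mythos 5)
Your proposal is correct; note that the paper itself states Lemma~\ref{le:invariant} without proof, so there is no in-paper argument to compare against. Both of your routes to parametrization independence are sound: the transformation law $(\partial\parametrization_1)^\dagger=(\partial\phi)^{-1}(\partial\parametrization_2)^\dagger$ does follow from $g_1=(\partial\phi)^T g_2\,\partial\phi$ and \eqref{eq:projectiona}, and the intrinsic characterization (tangentiality of $\gradientM v$ plus the pairing \eqref{eq:relations_gradients} against a spanning set of tangent vectors) pins the gradient down uniquely. The product rules reduce, exactly as you say, to the Euclidean product rule on $\Omega$ applied entrywise, with only row/column bookkeeping left over.
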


\begin{lem}
\label{le:Pro_gradient}
Let $\normal(x)=(n_1(x),\cdots,n_{d+1}(x))^T$ the unit normal vector field on $\surface$, and let $\vv:\surface \rightarrow \R^{d+1}$ be a differentiable vector field, then
\begin{enumerate}
\item The following formulas hold
		\begin{equation*}
        \normal^T\gradientM \normal=0, \quad \Ptm \gradientM \normal=\gradientM \normal \text{  and  } \Ptm \normal = 0\;,
		\end{equation*} 
		and    
		\begin{equation*}
		 \begin{aligned}
  		\Ptm \gradientM(\Ptm \vv) &= \Ptm \gradientM \vv-(\normal^T \vv)\gradientM \normal,\\
  		\Pn\gradientM(\Pn \vv) &= \Pn\gradientM \vv +\normal \vv^T\gradientM \normal.
		 \end{aligned}
		\end{equation*}
\item  Moreover, the seminorm can be represented by 
	\begin{equation*}
	 \abs{\vv}^2_{\hone{\surface}}=\norm{\gradientM \vv -(\normal^T \vv)\gradientM \normal +\normal \vv^T\gradientM \normal}^2_{\leb{2}}.
	\end{equation*}
\end{enumerate}
\end{lem}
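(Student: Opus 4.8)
The plan is to first establish the three elementary identities in the first line of item (1), then use them together with the product rules of Lemma \ref{le:invariant} to derive the two projection formulas, and finally to assemble item (2) by summing those formulas and invoking $\Ptm + \Pn = \mathcal{I}$.

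First I would prove $\normal^T \gradientM \normal = 0$. Since $\normal$ is a unit vector, $\normal^T\normal \equiv 1$ is constant, so applying the product rule $\gradientM(\vv^T\vw) = \vv^T\gradientM\vw + \vw^T\gradientM\vv$ from Lemma \ref{le:invariant} with $\vv=\vw=\normal$ gives $0 = \gradientM(\normal^T\normal) = 2\,\normal^T\gradientM\normal$. The identity $\Ptm\normal = 0$ follows directly from the matrix form $\Ptm = \mathcal{I} - \normal\normal^T$ in \eqref{eq:pn}, because $\normal\normal^T\normal = \normal$. Then
\[
\Ptm\gradientM\normal = \gradientM\normal - \normal(\normal^T\gradientM\normal) = \gradientM\normal,
\]
using the first identity.

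Next I would compute the two projection formulas simultaneously, the key observation being that $\Ptm\vv = \vv - (\normal^T\vv)\normal$ and $\Pn\vv = (\normal^T\vv)\normal$, so both reduce to understanding $\gradientM\big((\normal^T\vv)\normal\big)$. Applying the three-factor product rule $\gradientM((\vv^T\vw)\vz) = \vz\vv^T\gradientM\vw + \vz\vw^T\gradientM\vv + (\vv^T\vw)\gradientM\vz$ from Lemma \ref{le:invariant} with the assignment $\vv\to\normal$, $\vw\to\vv$, $\vz\to\normal$ yields
\[
\gradientM\big((\normal^T\vv)\normal\big) = \normal\normal^T\gradientM\vv + \normal\vv^T\gradientM\normal + (\normal^T\vv)\gradientM\normal.
\]
For the tangential formula I would left-multiply $\gradientM(\Ptm\vv) = \gradientM\vv - \gradientM\big((\normal^T\vv)\normal\big)$ by $\Ptm$: the two terms carrying a leading factor $\normal$ are annihilated since $\Ptm\normal = 0$, while $\Ptm\gradientM\normal = \gradientM\normal$ survives, giving $\Ptm\gradientM(\Ptm\vv) = \Ptm\gradientM\vv - (\normal^T\vv)\gradientM\normal$. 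For the normal formula I would left-multiply $\gradientM(\Pn\vv) = \gradientM\big((\normal^T\vv)\normal\big)$ by $\Pn = \normal\normal^T$ and use $\normal^T\normal = 1$ together with $\normal^T\gradientM\normal = 0$; the last term then vanishes, leaving $\Pn\gradientM(\Pn\vv) = \Pn\gradientM\vv + \normal\vv^T\gradientM\normal$.

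Finally, for item (2), I would recall from Definition \ref{def:spaces} that $\abs{\vv}^2_{\hone{\surface}} = \norm{\Ptm\gradientM(\Ptm\vv) + \Pn\gradientM(\Pn\vv)}^2_{\leb{2}}$, substitute the two formulas just derived, and combine the gradient terms using $\Ptm + \Pn = \mathcal{I}$ to obtain $\gradientM\vv - (\normal^T\vv)\gradientM\normal + \normal\vv^T\gradientM\normal$ inside the norm, as claimed. No step is genuinely hard; the only real obstacle is bookkeeping — correctly matching the three-factor product rule to $(\normal^T\vv)\normal$ and tracking which terms are killed by $\Ptm$ versus $\Pn$. One must also keep in mind that $\normal^T\vv$ is scalar, so it commutes freely past the matrix-valued surface gradients.
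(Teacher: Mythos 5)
Your proposal is correct and follows essentially the same route as the paper: derive $\normal^T\gradientM\normal=0$, $\Ptm\gradientM\normal=\gradientM\normal$ and $\Ptm\normal=0$ from the product rules of Lemma \ref{le:invariant}, expand $\gradientM\bigl((\normal^T\vv)\normal\bigr)$ via the three-factor rule, project with $\Ptm$ and $\Pn$ respectively, and sum the two identities to read off the seminorm from Definition \ref{def:spaces}. The only cosmetic difference is that the paper additionally records the $\leb{2}$-orthogonal splitting of the resulting norm into its tangential and normal parts, which your argument does not need for the stated identity.
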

\begin{proof}
For the first item, since $\normal$ is a unit normal vector field, 
\begin{equation*}\normal^T(x) \normal(x) =1, \quad \forall x\in \surface.\end{equation*}
Then from Lemma \ref{le:invariant} it follows that
\begin{equation*}2\normal^T(x)\gradientM \normal(x)=\gradientM (\normal^T(x) \normal(x))=0.\end{equation*}
Moreover, $\Ptm(x)=\mathcal{I}-\normal(x)\normal^T(x),$ and thus from the previous it follows that
\begin{equation*}\Ptm(x)\gradientM \normal(x)=\mathcal{I}\gradientM \normal(x)-\normal(x)\normal^T(x)\gradientM \normal(x)=\gradientM \normal(x).\end{equation*}
We apply the product rules in Lemma \ref{le:invariant} and properties of $\normal$ derived above, which show that
\begin{equation*}
\begin{aligned}
\Ptm\gradientM(\Ptm \vv) &= \Ptm \gradientM( \vv-(\normal^T \vv)\normal)\\
&= \Ptm ( \gradientM \vv-\normal\normal^T\gradientM \vv- \normal\vv^T\gradientM \normal- (\normal^T \vv)\gradientM \normal)\\
&= \Ptm \gradientM \vv - (\normal^T \vv)\gradientM \normal,
\end{aligned}
\end{equation*} 
and
\begin{equation*}
\begin{aligned}
\Pn\gradientM(\Pn \vv) &= \Pn\gradientM((\normal^T \vv)\normal)\\
& = \Pn (\normal\normal^T\gradientM \vv+ \normal\vv^T\gradientM \normal+ (\normal^T \vv)\gradientM \normal)\\
&= \Pn \gradientM \vv +\normal\vv^T\gradientM \normal.
\end{aligned}
\end{equation*}
\item Adding the two identities, and using the orthogonality of $\Ptm$ and $\Pn$ on $\leb{2}$, we get the second item from Definition \ref{def:spaces}:
\begin{equation*}
\begin{aligned}
~ & \norm{\gradientM \vv -(\normal^T \vv)\gradientM \normal +\normal \vv^T\gradientM \normal}^2_{\leb{2}}\\
=& \norm{\Ptm \gradientM(\Ptm \vv)+\Pn\gradientM(\Pn \vv)}^2_{\leb{2}}\\
=& \norm{\Ptm \gradientM(\Ptm \vv)}^2_{\leb{2}}+\norm{\Pn\gradientM(\Pn \vv)}^2_{\leb{2}}\\
=&\abs{\vv}^2_{\hone{\surface}}.
\end{aligned}
\end{equation*}
This concludes the proof.
\end{proof}
\medskip

\end{document}